\documentclass[pdflatex,sn-mathphys-num]{sn-jnl}

\usepackage{graphicx}%
\usepackage{multirow}%
\usepackage{amsmath,amssymb,amsfonts}%
\usepackage{amsthm}%
\usepackage{mathrsfs}%
\usepackage[title]{appendix}%
\usepackage{xcolor}%
\usepackage{textcomp}%
\usepackage{manyfoot}%
\usepackage{booktabs}%
\usepackage{algorithm}%
\usepackage{algorithmicx}%
\usepackage{algpseudocode}%
\usepackage{listings}

\DeclareMathOperator{\Tr}{Tr}  
\DeclareMathOperator{\SU}{SU}
\DeclareMathOperator{\U}{U}
\DeclareMathOperator{\SO}{SO}
\DeclareMathOperator{\diag}{diag}
\DeclareMathOperator{\sech}{sech}
\theoremstyle{thmstyleone}%
\newtheorem{theorem}{Theorem}
\newtheorem{proposition}[theorem]{Proposition}%
\newtheorem{lemma}[theorem]{Lemma}

\theoremstyle{thmstyletwo}%
\newtheorem{remark}{Remark}%

\theoremstyle{thmstylethree}%

\begin{document}

\title[Vacuum-Orbit Calibrations and Global Minimality]{Global Minimality of Matrix-Valued Heteroclinic Connections between Vacuum Orbits}

\author*[1]{\fnm{Sisi} \sur{Guan}}
\email{sisiguan98@bicmr.pku.edu.cn}

\author[1]{\fnm{Yu} \sur{Cheng}}
\email{chengyu0710@zju.edu.cn}

\affil*[1]{%
\orgdiv{School of Mathematical Sciences},
\orgname{Zhejiang University},
\orgaddress{%
\city{Hangzhou},
\postcode{310058},
\country{China}}}

\presentaddress{%
Sisi Guan, Beijing International Center for Mathematical Research, Peking University, Beijing 100871, China.}
\abstract{We study global energy minimization for matrix-valued heteroclinic connections whose vacuum sets are continuous conjugacy orbits rather than isolated points.  For two distinguished parameter regimes of a one-dimensional $\SU(5)\times\mathbb Z_2$ matrix field model, we first identify the full zero set of the quartic potential as the disjoint union of two compact homogeneous vacuum orbits.  We then construct auxiliary potentials controlled by the Frobenius distance to these orbits.  Pointwise comparison with the original potential, combined with a calibration inequality and the one-dimensional coarea formula, gives a sharp lower bound for every finite-energy connection joining the two vacuum components; no diagonality, commutativity, or symmetry-reduced ansatz is imposed on the competitors.  Two explicit non-Abelian kinks attain the bound and are therefore global minimizers in the full orbit-to-orbit heteroclinic classes. Their exact wall tensions are $4\sqrt2\,\mu^3/(3\lambda)$ and $9\sqrt2\,\mu^3/(10h)$, respectively.  Equivalently, the corresponding line-segment paths realize the degenerate weighted distance between the vacuum manifolds.  We also resolve every fixed-endpoint sector: for arbitrary prescribed representatives on the two vacuum orbits, the energy infimum is the same wall tension, and it is attained if and only if the prescribed representatives form a closest pair of the two orbits.  Nonclosest endpoint sectors therefore exhibit non-attainment through increasingly slow tangential motion along the vacuum manifolds. The proof separates a model-independent distance--coarea criterion from the model-specific orbit geometry and polynomial positivity estimates, providing a reusable mechanism for matrix-valued multiwell potentials with symmetry-generated vacua.}

\keywords{calculus of variations, matrix-valued multiwell potential, heteroclinic connection, vacuum manifold, global energy minimizer, weighted geodesic}
\pacs[MSC Classification]{49J05,34C37,35B35,58E10,53C30}

\maketitle

\section{Introduction}
Nonconvex energies with multiple or symmetry-generated wells form a basic variational framework for phase transitions, interfacial layers, and topological defects \cite{Modica1987,Sternberg1988,BethuelBrezisHelein1994,SandierSerfaty2007}.  In the classical scalar theory, the potential has finitely many isolated minima, and a transition between two phases is described by an ordered one-dimensional profile. The work of Modica and the subsequent vectorial extensions identify the corresponding interfacial cost through a variational problem in the order-parameter space \cite{Modica1987,Sternberg1988,FonsecaTartar1989,Baldo1990}. For vector-valued energies, the transition cost depends on the geometry of admissible paths in the order-parameter space rather than on a single ordered scalar profile. It is therefore naturally connected with a weighted geodesic problem, and different routes through a multiwell landscape may carry different costs \cite{Baldo1990,AlikakosBeteluChen2006,AlikakosFusco2008,ZunigaSternberg2016,MonteilSantambrogio2018}.  These issues underlie both the sharp-interface theory of multiphase systems and the analysis of stationary transition layers.

At the profile level, the relevant objects are heteroclinic connections. For a nonnegative potential $W$ on a finite-dimensional Hilbert space, a heteroclinic is a finite-action trajectory joining two components of $W^{-1}(0)$ at the two ends of the real line. When the wells are isolated, existence and action minimization have been studied by direct variational methods, Hamiltonian techniques, and weighted-metric formulations \cite{Rabinowitz1993,Sternberg1991,AlikakosFusco2008,ZunigaSternberg2016,MonteilSantambrogio2018}.  The elementary inequality between the kinetic and potential terms yields a weighted-length lower bound, and equipartition is the corresponding equality condition.  This formulation also exhibits a central difficulty of genuine multiwell problems: a minimizing route between two prescribed wells may pass through another well or through a geometrically cheaper part of the zero set \cite{AlikakosBeteluChen2006,AlikakosFusco2008,ZunigaSternberg2016,MonteilSantambrogio2018}.

The present problem has an additional feature that is absent from the usual finite-well setting. Owing to the internal symmetry of the matrix order parameter, the vacuum set is not discrete: its components are positive-dimensional conjugacy orbits. Thus the transition problem is manifold-to-manifold rather than point-to-point. Tangential directions along a vacuum orbit are energetically degenerate, nearest vacuum representatives need not be unique, and an admissible path may leave any symmetry-reduced or diagonal ansatz while retaining the prescribed limits. Potentials with higher-dimensional wells occur in several phase-transition models and require geometric information beyond that used for isolated minima; see, for example, \cite{LPW}.  In the setting considered here, the decisive quantity is the Frobenius distance to the two vacuum orbits.  A main purpose of this paper is to show that this orbit geometry can be converted into a sharp global variational bound.

This structure arises in grand unified models with a $\mathbb Z_2$-invariant self-interaction. Non-Abelian kinks and their symmetry-breaking patterns have been studied for $\SO(10)$, $\SU(5)$, and $\operatorname{E}_6$ theories; see, for example, \cite{SV,PV,V,MNPS,PR,DGK}. Pogosian and Vachaspati constructed the $\SU(5)$ domain-wall solutions relevant here and analyzed their symmetry structure and local stability \cite{PV}; Vachaspati embedded one of these solutions in a wider $\SU(N)\times\mathbb Z_2$ family \cite{V}. A second $\SU(5)$ kink was later shown to be locally stable through the spectral analysis of a Schr\"odinger-type operator \cite{GRC}. Here local and global minimality address different variational questions. Local minimality controls competitors in a sufficiently small neighborhood of a given profile, whereas global minimality requires comparison with every admissible connection in the prescribed heteroclinic class. The distinction is essential in the present matrix-valued setting: spectral positivity does not exclude a lower-energy competitor that moves in non-diagonal directions, changes its asymptotic representatives along the vacuum orbits, or crosses the potential landscape through a different route.

We consider the static energy associated with the $(1+1)$-dimensional Lagrangian density
\begin{equation}\label{equ:density}
    \mathcal L=\Tr\bigl((\partial_z\Phi)^2\bigr)-V(\Phi).
\end{equation}
The field takes values in the real Hilbert space
\begin{equation}\label{equ:h0}
    \mathfrak h_0(5):=\{Q\in\mathbb C^{5\times5}:Q^\dagger=Q,\ \Tr Q=0\},
\end{equation}
equipped with the Frobenius inner product $\langle A,B\rangle_F:=\Tr(AB)$ and norm $\lVert A\rVert_F:=(\Tr(A^2))^{1/2}$.  The potential is
\begin{equation}\label{equ:potential}
    V(\Phi)=-\mu^2\Tr(\Phi^2)+h\bigl(\Tr(\Phi^2)\bigr)^2+\lambda\Tr(\Phi^4)+V_0,
\end{equation}
where $\mu>0$. The additive constant $V_0$ is chosen in each parameter regime so that $\min_{\mathfrak h_0(5)}V=0$. We prove below that, in each of the two regimes considered here, the full zero set consists exactly of two disjoint compact $\SU(5)$-conjugacy orbits. Thus the relevant transition is genuinely between two positive-dimensional vacuum components.

The two explicit solutions studied in this paper are
\begin{equation}\label{equ:phia}
    \Phi_A(z)=\frac{\sqrt{5}\mu}{2\sqrt{\lambda}}\left[\diag(1,1,-1,-1,0)+\frac{1}{5}\tanh\left(\frac{\mu z}{\sqrt{2}}\right)\diag(1,1,1,1,-4)\right]
\end{equation}
for $\lambda>0$ and $h=-3\lambda/20$, and
\begin{equation}\label{equ:phib}
    \Phi_B(z)=\frac{3\mu}{4\sqrt{h}}\left[\diag(0,0,0,1,-1)+\frac{1}{5}\tanh\left(\frac{\mu z}{\sqrt{2}}\right)\diag(2,2,2,-3,-3)\right]
\end{equation}
for $h>0$ and $\lambda=-10h/9$.  They induce, respectively,
\begin{equation}\label{equ:breaking}
    \SU(5)\times\mathbb Z_2\longrightarrow\frac{\SU(3)\times\SU(2)\times\U(1)}{\mathbb Z_3\times\mathbb Z_2},\qquad\SU(5)\times\mathbb Z_2\longrightarrow\frac{\SU(4)\times\U(1)}{\mathbb Z_4}.
\end{equation}
The special parameter relations are those for which the reduced Euler--Lagrange systems decouple and the profiles are explicit \cite{PV,V,GRC}.

The contribution of this paper is threefold. First, we identify the complete zero set of the quartic potential in the two distinguished parameter regimes and show that it consists of two disjoint compact homogeneous conjugacy orbits. This converts the wall problem from a point-to-point connection problem into an orbit-to-orbit one and makes the vacuum geometry part of the variational analysis.

Second, we establish a model-independent distance--coarea criterion. If two disjoint compact zero sets $S^-$ and $S^+$ satisfy
\[
    W(q)\ge g\bigl(\min\{d(q,S^-),d(q,S^+)\}\bigr),
\]
then every finite-energy connection between them satisfies a sharp lower bound depending only on $g$ and $d(S^-,S^+)$. A companion rigidity statement shows that equality forces the prescribed endpoint representatives to form a closest pair.  These results isolate the variational mechanism from the specific $\SU(5)$ algebra.

Third, for each parameter regime we construct a distance-controlled auxiliary potential and prove the pointwise comparison $V\ge V_A$ or $V\ge V_B$. The comparison is the model-specific part of the proof and requires optimal spectral matching together with exact polynomial positivity estimates. The explicit profiles $\Phi_A$ and $\Phi_B$ traverse minimizing line segments, satisfy equipartition, and saturate every step of the distance--coarea bound. Consequently, they are global minimizers among all finite-energy matrix-valued connections whose limits lie on the corresponding vacuum orbits. Neither the path nor its asymptotic representatives are restricted to a diagonal, commuting, or symmetry-reduced class, and the exact wall tensions are obtained at the same time.

The continuous vacuum geometry also produces a fixed-endpoint phenomenon that has no analogue for isolated wells. For every prescribed pair $A^-\in\Sigma_*^-$ and $A^+\in\Sigma_*^+$, the energy infimum in $\mathcal A(A^-,A^+)$ equals the same orbit-to-orbit wall tension. The infimum is attained precisely when $(A^-,A^+)$ realizes the distance between the two vacuum orbits. For a nonclosest pair, one can approach the optimal energy by moving increasingly slowly along each zero-energy orbit before and after the central kink, but no minimizer exists. Thus the positive-dimensional vacuum set leads to an explicit loss-of-compactness mechanism in fixed-endpoint heteroclinic sectors.

The resulting proof separates the geometry of the vacuum set from the algebra of the particular potential. It therefore provides more than a stability calculation for two explicit profiles: it gives a reusable route to global minimality in matrix-valued multiwell energies with continuous symmetry-generated vacua. At the same time, the pointwise orbit-distance comparison is genuinely restrictive and records where the model-specific difficulty lies. Extending the result to general coupling ratios, different pairs of vacuum orbits, or larger $\SU(N)$ models amounts to establishing new sharp comparison inequalities rather than modifying the variational criterion itself.

The paper is organized as follows.  Section~\ref{sec:framework} sets up the variational framework, characterizes the complete vacuum sets and their homogeneous-space geometry, computes the orbit separation, and states the main theorems. Section~\ref{sec:auxiliary} constructs the auxiliary potentials and proves the pointwise orbit-distance estimates. Section~\ref{sec:criterion} establishes the general distance--coarea criterion and a rigidity statement for sharp equality. Sections~\ref{sec:app-a} and~\ref{sec:app-b} verify the equality conditions for $\Phi_A$ and $\Phi_B$, respectively, and identify the exact weighted transition costs. Section~\ref{sec:fixed-endpoints} analyzes arbitrary fixed-endpoint sectors, including the attainment criterion and the non-attainment mechanism for nonclosest endpoint pairs. Section~\ref{sec:conclusion} concludes, and the appendix records exact positivity certificates used in the model-specific estimates.

\section{Variational setting and vacuum manifolds}\label{sec:framework}
For $A^-,A^+\in\mathfrak h_0(5)$, define the fixed-endpoint class
\begin{equation*}
\begin{aligned}
    \mathcal A(A^-,A^+):=\{&\Psi\in H^1_{\mathrm{loc}}(\mathbb R;\mathfrak h_0(5)):\,\Psi'\in L^2(\mathbb R;\mathfrak h_0(5)),\\
    &V(\Psi)\in L^1(\mathbb R),\lim_{z\to-\infty}\Psi(z)=A^-,\lim_{z\to+\infty}\Psi(z)=A^+\}.
\end{aligned}
\end{equation*}
For two nonempty compact sets $S^-,S^+\subset\mathfrak h_0(5)$, we also use the orbit-to-orbit class
\begin{equation*}
    \mathcal A(S^-,S^+):=\bigcup_{A^-\in S^-,\,A^+\in S^+}\mathcal A(A^-,A^+).
\end{equation*}
A minimizer of $E$ in $\mathcal A(A^-,A^+)$ will be called a fixed-endpoint global minimizer, while a minimizer in $\mathcal A(S^-,S^+)$ will be called an orbit-to-orbit global minimizer. The latter class allows both endpoint representatives to vary on the two vacuum components.

For every admissible map, set
\begin{equation}\label{equ:energy}
    E(\Psi):=\int_{\mathbb R}\left(\lVert\Psi'(z)\rVert_F^2+V(\Psi(z))\right)\,\mathrm dz.
\end{equation}
The use of $H^1_{\mathrm{loc}}$, rather than $H^1(\mathbb R)$, is necessary because a heteroclinic profile with nonzero limits need not be square integrable.

For the first parameter regime let
\begin{equation}\label{equ:parameters-a}
    \lambda>0,\qquad h=-\frac{3}{20}\lambda,\qquad u:=\frac{\sqrt{5}\mu}{5\sqrt{\lambda}},\qquad V_0:=\frac{3\mu^4}{\lambda}.
\end{equation}
For the second let
\begin{equation}\label{equ:parameters-b}
    h>0,\qquad \lambda=-\frac{10}{9}h,\qquad v:=\frac{3\mu}{10\sqrt h},\qquad V_0:=\frac{9\mu^4}{10h}.
\end{equation}
The endpoint matrices are
\begin{align}
    \Phi_A^+&=u\diag(3,3,-2,-2,-2), & \Phi_A^-&=u\diag(2,2,-3,-3,2),\label{equ:endpoints-a}\\ 
    \Phi_B^+&=v\diag(1,1,1,1,-4), & \Phi_B^-&=v\diag(-1,-1,-1,4,-1).\label{equ:endpoints-b}
\end{align}
Their $\SU(5)$ conjugacy orbits are
\begin{equation*}
    \Sigma_\ast^\pm:= \{U^\dagger\Phi_\ast^\pm U:U\in\SU(5)\},\qquad \ast\in\{A,B\}.
\end{equation*}

We begin with the trace inequality that determines both the vacuum set and the normalization of the potential.

\begin{lemma}[Fourth-moment bounds]\label{lem:fourth-moment}
Let $x_1,\ldots,x_5\in\mathbb R$ satisfy $\sum_{i=1}^5x_i=0$, and put $p_j:=\sum_{i=1}^5x_i^j$.  Then
\begin{equation*}
    \frac{7}{30}p_2^2\le p_4\le \frac{13}{20}p_2^2.
\end{equation*}
The lower equality occurs, up to scaling and permutation, at $(3,3,-2,-2,-2)$, and the upper equality occurs, up to scaling and permutation, at $(4,-1,-1,-1,-1)$.
\end{lemma}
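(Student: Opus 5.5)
Both inequalities are homogeneous of degree four, so we fix the scale $p_2=1$ and study the extrema of $p_4$ on the compact set
\[
K:=\Bigl\{x\in\mathbb R^5:\ \sum_{i=1}^5 x_i=0,\ \sum_{i=1}^5 x_i^2=1\Bigr\}.
\]
This set is a $3$-sphere, so $p_4$ attains its maximum and minimum there. The two constraint gradients $(1,\dots,1)$ and $2x$ are linearly independent on $K$, because $x=c(1,\dots,1)$ would force $c=0$. The Lagrange multiplier rule therefore applies at every extremum and gives
\[
4x_i^3=\alpha+2\beta x_i,\qquad i=1,\dots,5.
\]
Hence every coordinate is a root of one fixed cubic $4t^3-2\beta t-\alpha$, whose $t^2$-coefficient vanishes. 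So the coordinates take at most three distinct values $a,b,c$. If all three occur, they are exactly the three roots, and Vieta gives $a+b+c=0$.

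Next we enumerate the critical configurations.
\begin{itemize}
\item[(i)] Exactly two distinct values occur with multiplicities $k$ and $5-k$. The sum constraint fixes their ratio, so up to scale and permutation the pattern is $(4,-1,-1,-1,-1)$ for $k=1$ and $(3,3,-2,-2,-2)$ for $k=2$.
\item[(ii)] Three distinct values occur with multiplicities $m_a+m_b+m_c=5$, together with $a+b+c=0$ and $m_aa+m_bb+m_cc=0$. Eliminating $c=-a-b$ gives $(m_a-m_c)a+(m_b-m_c)b=0$.
  \begin{itemize}
  \item[$\bullet$] If some multiplicity differs from $m_c$, the ratio $a:b$ is determined. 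The possible multiplicity patterns are $(3,1,1)$ and $(2,2,1)$ up to relabeling, and these give concrete vectors.
  \item[$\bullet$] If $m_a=m_b=m_c$, we would need $3m_c=5$, which is impossible.
  \end{itemize}
  For example, $(2,2,1)$ with $m_c=1$ gives $a+b=0$, hence $c=0$ and the vector $(1,1,-1,-1,0)$. The pattern $(3,1,1)$ with $m_a=3$ gives $2a=0$, hence $a=0$ and the vector $(0,0,0,1,-1)$.
\end{itemize}

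For each candidate we compute the ratio $p_4/p_2^2$:
\begin{align*}
(4,-1,-1,-1,-1):&\quad \frac{260}{400}=\frac{13}{20},\\
(3,3,-2,-2,-2):&\quad \frac{210}{900}=\frac{7}{30},\\
(1,1,-1,-1,0):&\quad \frac{4}{16}=\frac14,\\
(0,0,0,1,-1):&\quad \frac{2}{4}=\frac12 .
\end{align*}
Since $\tfrac{7}{30}<\tfrac14<\tfrac12<\tfrac{13}{20}$, the global minimum and maximum on $K$ are $\tfrac{7}{30}$ and $\tfrac{13}{20}$. Every extremum is a critical point, so equality in each bound holds only at the listed patterns, up to scaling and permutation. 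The zero vector is excluded from the equality statement, where both sides vanish trivially.

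The main obstacle is making the enumeration in (ii) exhaustive. One must also check the degenerate subcases where a multiplier configuration collapses, for instance $\alpha=0$ allowing the value $0$, or a double root of the cubic. These are covered because any such configuration still uses at most three values and so falls under (i) or (ii).

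An alternative that avoids this case analysis and certifies the lower bound directly is the following. On $\sum x_i=0$, write $p_4-\tfrac{7}{30}p_2^2$ as a sum of squares plus multiples of $(\sum x_i)$; this would follow the paper's style of exact positivity certificates. The Lagrange argument above suffices, however, and it is the route I would present.
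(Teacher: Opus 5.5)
Your proof is correct and follows the same route as the paper: normalize $p_2=1$, apply Lagrange multipliers to obtain a cubic with no quadratic term (so at most three values, summing to zero when three occur), and then compare the finitely many critical values. Your case analysis is slightly more accurate than the paper's: the paper asserts $p_4=1/2$ for both three-value patterns, whereas the $(2,2,1)$ pattern gives $p_4/p_2^2=1/4$, as you found; since $7/30<1/4<1/2<13/20$, the conclusion is unaffected.
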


\begin{proof}
By homogeneity it is enough to impose $p_2=1$.  At an extremum of $p_4$ under the constraints $p_1=0$ and $p_2=1$, the Lagrange multiplier equations have the form
\[
    4x_i^3-2\sigma_2x_i-\sigma_1=0,\qquad i=1,\ldots,5.
\]
Thus the coordinates take at most three distinct values. If there are two distinct values with multiplicities $m$ and $5-m$, the constraints give
\[
    p_4=\frac1{25}\left(\frac{(5-m)^2}{m}+\frac{m^2}{5-m}\right).
\]
For $m=1,4$ this is $13/20$, and for $m=2,3$ it is $7/30$. If three distinct roots $a,b,c$ occur, then $a+b+c=0$ because the cubic above has no quadratic term. The possible multiplicity partitions are $(3,1,1)$ and $(2,2,1)$, up to permutation. Combining the weighted trace constraint with $a+b+c=0$ shows that one value is zero and the other two are opposite; hence $p_4=1/2$.  This lies strictly between the two asserted extrema.
\end{proof}

The sharp equality cases in Lemma~\ref{lem:fourth-moment} identify the only eigenvalue multiplicity patterns that can annihilate the quartic potential. They therefore determine the full vacuum set, rather than merely providing a lower bound for $V$.

\begin{proposition}[Exact vacuum characterization]\label{prop:vacuum-characterization}
In the parameter regime \eqref{equ:parameters-a},
\[
    V\ge0,\qquad V^{-1}(0)=\Sigma_A^-\cup\Sigma_A^+.
\]
In the parameter regime \eqref{equ:parameters-b},
\[
    V\ge0,\qquad V^{-1}(0)=\Sigma_B^-\cup\Sigma_B^+.
\]
In both cases the two components are compact, connected, and disjoint.
\end{proposition}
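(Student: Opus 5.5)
Since $V$ is a function of the eigenvalues alone and is conjugation invariant, we diagonalize $\Phi=U^\dagger\diag(x_1,\dots,x_5)U$ with $\sum x_i=0$. With $p_2=\Tr\Phi^2=t\ge0$ and $p_4=\Tr\Phi^4$, we have
\[
V=-\mu^2t+ht^2+\lambda p_4+V_0 .
\]
We then use Lemma~\ref{lem:fourth-moment} to bound $p_4$ by $t^2$, with the direction set by the sign of $\lambda$.

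\emph{Regime \eqref{equ:parameters-a}.} Here $\lambda>0$ and $h=-3\lambda/20$. Lemma~\ref{lem:fourth-moment} gives $\lambda p_4\ge \frac{7}{30}\lambda t^2$, so
\[
V\ge -\mu^2t+\Bigl(\frac{7}{30}-\frac{3}{20}\Bigr)\lambda t^2+V_0=\frac{\lambda}{12}t^2-\mu^2t+V_0 .
\]
This quadratic in $t$ has minimum value $V_0-3\mu^4/\lambda=0$ at $t_*=6\mu^2/\lambda$. Hence $V\ge0$.

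Equality requires both $t=t_*$ and equality in the lower bound of the lemma. By the lemma's equality statement, the eigenvalues are then a permutation of $s(3,3,-2,-2,-2)$ for some real $s$. The sign of $s$ matters here: $s>0$ gives the pattern of $\Phi_A^+$, while $s<0$ gives $|s|(-3,-3,2,2,2)$, which is the pattern of $\Phi_A^-=u\diag(2,2,-3,-3,2)$. The normalization $30s^2=t_*=6\mu^2/\lambda$ gives $s^2=\mu^2/(5\lambda)=u^2$, matching \eqref{equ:parameters-a}. Any permutation of eigenvalues is realized by $\SU(5)$ conjugation (a permutation matrix times a phase to fix the determinant). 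Therefore $V^{-1}(0)=\Sigma_A^+\cup\Sigma_A^-$, and conversely both orbits lie in the zero set.

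\emph{Regime \eqref{equ:parameters-b}.} Here $\lambda<0$, so we use the upper bound $p_4\le\frac{13}{20}t^2$, which gives $\lambda p_4\ge \frac{13}{20}\lambda t^2$. Then
\[
V\ge -\mu^2t+\Bigl(1-\frac{13}{20}\cdot\frac{10}{9}\Bigr)ht^2+V_0=\frac{5h}{18}t^2-\mu^2t+V_0 .
\]
The minimum value is $V_0-\frac{9\mu^4}{10h}=0$, attained at $t_*=\frac{9\mu^2}{5h}$. Equality forces the eigenvalues to be $s(4,-1,-1,-1,-1)$ up to permutation, with $20s^2=t_*$, so $s^2=\frac{9\mu^2}{100h}=v^2$. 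The sign $s<0$ gives the pattern of $\Phi_B^+$, and $s>0$ gives that of $\Phi_B^-$. So $V^{-1}(0)=\Sigma_B^+\cup\Sigma_B^-$.

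\emph{Topology.} Each orbit is the continuous image of the compact connected group $\SU(5)$, hence compact and connected. The two orbits in each regime are disjoint because conjugation preserves the spectrum, and the two spectra differ: $\{3,3,-2,-2,-2\}$ versus $\{-3,-3,2,2,2\}$ (times $u$) in regime A, and $\{4,-1,-1,-1,-1\}$ versus $\{-4,1,1,1,1\}$ (times $v$) in regime B.

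The main point needing care is the equality case, namely the sign bookkeeping of $s$ that splits the zero set into exactly two orbits. The lemma's phrase ``up to scaling'' must be read as allowing negative scalings, and the lemma's proof covers this: the extremal analysis lists all critical configurations, including sign-reversed ones. The remaining steps are routine arithmetic.
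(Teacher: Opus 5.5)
Your proof is correct and follows the paper's argument: you diagonalize, apply the side of Lemma~\ref{lem:fourth-moment} dictated by the sign of $\lambda$, and read off the equality cases, with the two signs of the scaling giving $\Sigma_\ast^+$ and $\Sigma_\ast^-$. The paper packages your minimization of the quadratic in $t=p_2$ as an exact identity, writing $V$ (after rescaling by $u$ or $v$) as a completed square in $p_2$ plus the nonnegative fourth-moment gap; this is the same computation.
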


\begin{proof}
For the $A$-type regime, diagonalize $Q=u\,U^\dagger\diag(x_1,\ldots,x_5)U$, where $\sum_i x_i=0$. Direct substitution gives
\begin{equation*}
    \frac{\lambda}{\mu^4}V(Q)=\frac{(p_2-30)^2}{300}+\frac1{25}\left(p_4-\frac7{30}p_2^2\right)\ge0
\end{equation*}
by Lemma~\ref{lem:fourth-moment}. Equality requires $p_2=30$ and equality in the lower fourth-moment bound.  Its equality characterization implies that the eigenvalue vector is, up to permutation, $\pm(3,3,-2,-2,-2)$; these two signs give precisely $\Sigma_A^+$ and $\Sigma_A^-$.

For the $B$-type regime, write $Q=v\,U^\dagger\diag(x_1,\ldots,x_5)U$.  Then
\begin{equation*}
    \frac{10h}{9\mu^4}V(Q)=\frac{(p_2-20)^2}{400}+\frac1{100}\left(\frac{13}{20}p_2^2-p_4\right)\ge0.
\end{equation*}
Equality requires $p_2=20$ and equality in the upper fourth-moment bound, so the eigenvalue vector is, up to permutation, $\pm(4,-1,-1,-1,-1)$.  These are exactly the two orbits $\Sigma_B^\pm$. Compactness and connectedness follow from compactness and connectedness of $\SU(5)$; disjointness follows from the distinct spectra.
\end{proof}

The preceding proposition determines the vacuum components spectrally. Their intrinsic geometry records the zero-energy directions available to an admissible connection and will be used later in the fixed-endpoint recovery construction.

\begin{remark}[Homogeneous-space geometry]\label{rmk:orbit-geometry}
The stabilizer of an $A$-type vacuum is $S(\U(2)\times\U(3))$, whereas that of a $B$-type vacuum is $S(\U(4)\times\U(1))$.  Consequently,
\[
    \Sigma_A^\pm\simeq \frac{\SU(5)}{S(\U(2)\times\U(3))}\simeq \operatorname{Gr}_{\mathbb C}(2,5),\qquad \dim_{\mathbb R}\Sigma_A^\pm=12,
\]
and
\[
    \Sigma_B^\pm\simeq\frac{\SU(5)}{S(\U(4)\times\U(1))}\simeq \mathbb{CP}^4,\qquad \dim_{\mathbb R}\Sigma_B^\pm=8.
\]
Thus the zero-energy tangential directions are precisely the tangent directions of these compact homogeneous manifolds.
\end{remark}

To formulate the transition cost geometrically, we next compute the separation of the two vacuum components in the ambient Frobenius metric. For nonempty compact sets $S,T\subset\mathfrak h_0(5)$ define
\begin{equation*}
    d_F(Q,S):=\min_{P\in S}\lVert Q-P\rVert_F,\qquad d_F(S,T):=\min_{\substack{P\in S\\Q\in T}}\lVert P-Q\rVert_F.
\end{equation*}

\begin{lemma}[Distance between unitary orbits]\label{lem:orbit-distance}
Let $A,B$ be Hermitian matrices with eigenvalues $a_1\ge\cdots\ge a_5$ and $b_1\ge\cdots\ge b_5$. Then
\begin{equation}\label{equ:unitary-orbit-distance}
    \min_{U\in\SU(5)}\lVert A-U^\dagger BU\rVert_F^2=\sum_{i=1}^5(a_i-b_i)^2.
\end{equation}
Consequently,
\begin{equation}\label{equ:orbit-distances}
    d_A:=d_F(\Sigma_A^-,\Sigma_A^+)=2\sqrt5\,u,\qquad d_B:=d_F(\Sigma_B^-,\Sigma_B^+)=\sqrt{30}\,v,
\end{equation}
and the pairs in \eqref{equ:endpoints-a}--\eqref{equ:endpoints-b} attain these distances.
\end{lemma}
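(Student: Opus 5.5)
The plan is to reduce \eqref{equ:unitary-orbit-distance} to a trace inequality of von Neumann/Hoffman--Wielandt type. Expanding the Frobenius norm gives
\[
\lVert A-U^\dagger BU\rVert_F^2=\Tr(A^2)+\Tr(B^2)-2\Tr(AU^\dagger BU),
\]
because conjugation preserves $\Tr(B^2)$. It therefore suffices to show
\[
\max_{U\in\SU(5)}\Tr(AU^\dagger BU)=\sum_i a_ib_i .
\]

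For the upper bound, diagonalize $A=W_1^\dagger D_aW_1$ and $B=W_2^\dagger D_bW_2$. Then $\Tr(AU^\dagger BU)=\sum_{i,j}a_ib_j|X_{ij}|^2$, where $X=W_1U^\dagger W_2^\dagger$ is unitary. The matrix $(|X_{ij}|^2)$ is doubly stochastic, so by Birkhoff's theorem the linear functional $\sum a_ib_jS_{ij}$ is maximized at a permutation matrix. The rearrangement inequality then shows that the identity permutation is optimal, since both sequences are sorted decreasingly. The bound is attained by any unitary $U$ that aligns the two eigenbases in matching order. Such a $U$ can be taken in $\SU(5)$ after multiplying by a scalar phase $e^{i\theta}$, which does not change the conjugation $U^\dagger BU$. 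This gives \eqref{equ:unitary-orbit-distance}.

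For \eqref{equ:orbit-distances}, note that the orbits are $\SU(5)$-invariant. Hence
\[
d_F(\Sigma^-,\Sigma^+)=\min_U\lVert \Phi^- - U^\dagger\Phi^+U\rVert_F,
\]
and this equals the sorted-eigenvalue formula. For type $A$, the sorted spectra are $u(3,3,-2,-2,-2)$ and $u(2,2,2,-3,-3)$. The differences are $u(1,1,-4,1,1)$, whose squared norm is $20u^2$, so $d_A=2\sqrt5\,u$. For type $B$, the sorted spectra are $v(1,1,1,1,-4)$ and $v(4,-1,-1,-1,-1)$. The differences are $v(-3,2,2,2,-3)$, whose squared norm is $30v^2$, so $d_B=\sqrt{30}\,v$.

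Finally, one checks directly that $\lVert\Phi_A^+-\Phi_A^-\rVert_F^2=u^2\lVert(1,1,1,1,-4)\rVert^2=20u^2$. Likewise $\lVert\Phi_B^+-\Phi_B^-\rVert_F^2=v^2\lVert(2,2,2,-3,-3)\rVert^2=30v^2$. So the listed representatives attain the minimum. The only step with real content is the doubly stochastic/rearrangement bound. Everything else is bookkeeping, with some care needed to keep the $\SU(5)$-versus-$\U(5)$ phase issue straight.
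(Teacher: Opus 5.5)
Your proof is correct and follows the paper's route. You expand the Frobenius norm, bound $\Tr(AU^\dagger BU)$ by $\sum_i a_ib_i$ (you prove this step via Birkhoff's theorem and rearrangement, where the paper simply cites the von Neumann trace inequality), adjust a scalar phase to land in $\SU(5)$, and substitute the sorted spectra; your explicit check that the listed pairs give $20u^2$ and $30v^2$ matches the paper's concluding substitution.
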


\begin{proof}
Expanding the Frobenius norm reduces the minimization to maximizing $\Tr(AU^\dagger BU)$. The von Neumann trace inequality, followed by the rearrangement inequality, gives (see, e.g.,
\cite{HoffmanWielandt1953,Bhatia1997})
\[
    \max_{U\in\U(5)}\Tr(AU^\dagger BU)=\sum_{i=1}^5a_ib_i.
\]
Multiplying a maximizing unitary by a scalar phase leaves the conjugation unchanged, so the same maximum is attained in $\SU(5)$. This proves \eqref{equ:unitary-orbit-distance}. Substitution of the ordered spectra in \eqref{equ:endpoints-a} and \eqref{equ:endpoints-b} gives \eqref{equ:orbit-distances}.
\end{proof}

The next elementary observation describes the distance-to-vacuum function along any segment joining a closest pair. It is the geometric equality case that will allow the explicit kinks to saturate the coarea lower bound.

\begin{lemma}[Distance along a minimizing segment]\label{lem:minimizing-segment}
Let $S^-,S^+$ be nonempty compact subsets of a Hilbert space, let $D=d(S^-,S^+)$, and suppose $p^\pm\in S^\pm$ satisfy $\lVert p^+-p^-\rVert=D$. For $q_s=(1-s)p^-+sp^+$, $0\le s\le1$, one has
\[
    d(q_s,S^-)=sD,\qquad d(q_s,S^+)=(1-s)D.
\]
In particular, $\min\{d(q_s,S^-),d(q_s,S^+)\}=D\min\{s,1-s\}$.
\end{lemma}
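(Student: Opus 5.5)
The proof reduces to two inequalities for $d(q_s,S^-)$, an upper bound and a matching lower bound; the statement for $S^+$ follows by the symmetric argument with $s$ replaced by $1-s$.

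For the upper bound, we use the candidate $p^-\in S^-$. Since $q_s-p^-=s(p^+-p^-)$, we get
\[
    d(q_s,S^-)\le\lVert q_s-p^-\rVert=s\lVert p^+-p^-\rVert=sD.
\]
The same computation with $p^+$ gives $d(q_s,S^+)\le\lVert q_s-p^+\rVert=(1-s)D$.

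For the lower bound, we argue by contradiction using the triangle inequality and the minimality of $D$. Compactness of $S^-$ provides a point $r\in S^-$ with $\lVert q_s-r\rVert=d(q_s,S^-)$. Since $r\in S^-$ and $p^+\in S^+$, the definition of $D$ gives
\[
    D\le\lVert r-p^+\rVert\le\lVert r-q_s\rVert+\lVert q_s-p^+\rVert=d(q_s,S^-)+(1-s)D.
\]
Hence $d(q_s,S^-)\ge sD$. Combined with the upper bound, this gives $d(q_s,S^-)=sD$. Symmetrically, taking a nearest point $r'\in S^+$ to $q_s$ gives $D\le\lVert p^--r'\rVert\le sD+d(q_s,S^+)$, so $d(q_s,S^+)=(1-s)D$.

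The final formula $\min\{d(q_s,S^-),d(q_s,S^+)\}=D\min\{s,1-s\}$ follows immediately. There is no genuine obstacle here. The only point needing care is that the nearest points $r,r'$ exist, which holds because the sets are compact; alternatively, one can take an infimizing sequence and pass to the limit in the inequality. The Hilbert-space structure enters only through the norm and the linearity of the segment $q_s$, so no inner-product argument is required.
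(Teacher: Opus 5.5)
Your proposal is correct and matches the paper's proof: the upper bound comes from comparing with $p^\mp$, and the lower bound from the triangle inequality through $p^\pm$ together with the minimality of $D$. The only difference is that you pick a nearest point $r\in S^-$ using compactness, whereas the paper bounds $\lVert q_s-a\rVert\ge sD$ for every $a\in S^-$ and then takes the infimum, so compactness is not needed at that step.
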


\begin{proof}
The upper bounds follow by comparison with $p^-$ and $p^+$. For every $a\in S^-$,
\[
    \lVert q_s-a\rVert\ge \lVert p^+-a\rVert-\lVert p^+-q_s\rVert\ge D-(1-s)D=sD.
\]
Taking the infimum over $a\in S^-$ gives the first equality; the second is symmetric.
\end{proof}

For later comparison with the metric formulation of the connection problem, define the degenerate weighted distance
\begin{equation*}
    \mathsf d_V(S^-,S^+):=\inf_{\substack{\gamma\in AC([0,1];\mathfrak h_0(5))\\\gamma(0)\in S^-,\,\gamma(1)\in S^+}}2\int_0^1\sqrt{V(\gamma(t))}\,\lVert\gamma'(t)\rVert_F\,\mathrm dt.
\end{equation*}

The main results can now be stated in their natural orbit-to-orbit form. The first two theorems assert global minimality in the largest heteroclinic classes considered here; the third theorem explains what changes when the endpoint representatives are fixed in advance.

\begin{theorem}[Orbit-to-orbit global minimality: A-type kink]\label{thm:globala}
Under \eqref{equ:parameters-a}, the kink $\Phi_A$ defined in \eqref{equ:phia} is a global minimizer of $E$ in $\mathcal A(\Sigma_A^-,\Sigma_A^+)$.  More precisely,
\[
    E(\Psi)\ge E(\Phi_A)=\frac{4\sqrt2\,\mu^3}{3\lambda}\qquad\text{for every }\Psi\in\mathcal A(\Sigma_A^-,\Sigma_A^+).
\]
Moreover,
\[
    \mathsf d_V(\Sigma_A^-,\Sigma_A^+)=E(\Phi_A).
\]
\end{theorem}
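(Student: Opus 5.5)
The plan is to follow the distance--coarea strategy announced in the introduction. Write $\delta(Q):=\min\{d_F(Q,\Sigma_A^-),d_F(Q,\Sigma_A^+)\}$. The first step is to construct an auxiliary profile $g_A:[0,d_A/2]\to[0,\infty)$ for which the pointwise comparison
\[
V(Q)\ge V_A(Q):=g_A(\delta(Q))
\]
holds on all of $\mathfrak h_0(5)$. The natural choice of $g_A$ is dictated by the explicit kink. Along the segment joining $\Phi_A^-$ and $\Phi_A^+$, Lemma~\ref{lem:minimizing-segment} gives $\delta(q_s)=d_A\min\{s,1-s\}$. Parametrizing $\Phi_A$ by $s=(1+\tanh(\mu z/\sqrt2))/2$, one checks that $\Phi_A(z)=q_s$. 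We then define $g_A(t):=V(q_{t/d_A})$ for $t\le d_A/2$. This is a quartic in $t$ of double-well type, essentially $c\,t^2(d_A-t)^2$, symmetric about $d_A/2$.

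The second step is to prove $V(Q)\ge g_A(\delta(Q))$, which is the main obstacle. By conjugation invariance, both sides depend only on the spectrum of $Q$. Lemma~\ref{lem:orbit-distance} gives
\[
d_F(Q,\Sigma_A^\pm)^2=\sum_i (x_i\mp\sigma^{\mp}_i)^2,
\]
where the $x_i$ are the ordered eigenvalues of $Q$ and $\sigma^\pm$ are the ordered spectra of $\pm u(3,3,-2,-2,-2)$. The inequality therefore becomes a polynomial inequality in the ordered vector $x\in\mathbb R^5$ with $\sum x_i=0$, split into the two regions where one or the other orbit is closer. By the symmetry $Q\mapsto -Q$, only one region needs treatment. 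There I would write
\[
V-g_A(\delta)=\tfrac{(p_2-30)^2}{300}+\tfrac1{25}\bigl(p_4-\tfrac7{30}p_2^2\bigr)-g_A(\delta),
\]
with $\delta$ affine-quadratic in $x$, and seek a sum-of-squares or Lagrange-multiplier certificate on the ordered cone. The appendix presumably records such certificates. One subtlety is that $g_A$ is not monotone globally, so the comparison should be arranged with $g_A$ evaluated at $\min\{\delta,d_A/2\}$. This is automatic, since $\delta\le d_A/2$ always holds by the triangle inequality.

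The third step is the coarea lower bound, which is model-independent. For $\Psi\in\mathcal A(\Sigma_A^-,\Sigma_A^+)$, the function $f(z)=d_F(\Psi(z),\Sigma_A^-)$ is $1$-Lipschitz in $\Psi$, so $|f'|\le\lVert\Psi'\rVert_F$. Moreover $f$ runs from $0$ to at least $d_A$. Set $h(t):=\min\{t,d_A-t\}$ and use
\[
\delta\ge \min\{f,\,d_A-f\},
\]
which follows from the triangle inequality. To handle the non-monotonicity of $g_A$, first replace $g_A$ on $[0,d_A/2]$ by itself, which is increasing there, and note that $g_A(\delta)\ge g_A(h(f))$ when $f\le d_A$. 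Then
\[
E(\Psi)\ge\int 2\sqrt{g_A(h(f))}\,|f'|\,dz\ge 2\int_0^{d_A}\sqrt{g_A(h(t))}\,dt.
\]
This is evaluated explicitly to obtain $4\sqrt2\mu^3/(3\lambda)$ using $d_A=2\sqrt5u$.

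The fourth step handles equality and the distance $\mathsf d_V$. The kink $\Phi_A$ satisfies the first-order equation $\lVert\Phi_A'\rVert_F=\sqrt{V(\Phi_A)}$, which follows from the $\tanh$ ODE. It traverses the segment, so every inequality above is an equality, and $E(\Phi_A)$ equals the bound. For $\mathsf d_V$, the same Lipschitz argument applied to any absolutely continuous $\gamma$ gives $\mathsf d_V\ge$ the bound. The segment itself, being a reparametrization of $\Phi_A$, attains it; the improper endpoints are handled by the finite-length reparametrization by $s\in[0,1]$.
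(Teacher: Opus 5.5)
Your architecture is the paper's: a segment-induced profile $g_A$, spectral reduction, calibration plus coarea, and equality via equipartition and Lemma~\ref{lem:minimizing-segment}. However, the claim that $\delta(Q)\le d_A/2$ ``always holds by the triangle inequality'' is false, and your coarea chain depends on it. The triangle inequality gives $d_F(Q,\Sigma_A^-)+d_F(Q,\Sigma_A^+)\ge d_A$. This forces $\delta\ge d_A/2$ somewhere on every connecting path, but gives no upper bound on $\delta$; already $\delta(0)=\sqrt{30}\,u>\sqrt5\,u=d_A/2$. At a point where both distances exceed $d_A/2$ but $f<d_A$, you have $h(f)=d_A-f<d_A/2<\delta$. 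There the step $V\ge g_A(\delta)\ge g_A(h(f))$ is unavailable: $g_A$ is undefined, and you would need a separate middle-region estimate $V\ge g_A(d_A/2)=\mu^4/(2\lambda)$ on $\{\delta\ge d_A/2\}$, which you do not state or prove. Extending $g_A$ by the same quartic does not rescue the claimed global comparison. For $Q=\kappa\Phi_A^+$ one has $V(Q)=3(\kappa^2-1)^2\mu^4/\lambda$, while $g_A(\delta(Q))\sim18\kappa^4\mu^4/\lambda$. The paper avoids the middle region entirely. It sets $g_A:=0$ on $[d_A/2,\infty)$, where only $V\ge0$ is used, and applies the area formula to $\rho_A\circ\Psi$ itself, whose levels in $(0,d_A/2)$ each have at least two preimages. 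The equivalent fix in your setup is to use $d_F(\Psi,\Sigma_A^-)$ only up to its first hitting time of $d_A/2$, and $d_F(\Psi,\Sigma_A^+)$ only after its last one. On these two disjoint half-lines $\rho_A$ coincides with the respective distance and stays below $d_A/2$.

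Second, the near-vacuum comparison, which you rightly call the main obstacle, is not proved, and it is not the polynomial problem you describe. Since $g_A(\delta)$ contains the term $\delta^3=(\delta^2)^{3/2}$, the difference $V-g_A(\delta)$ is not a polynomial in $x$, so a sum-of-squares certificate in $x$ does not apply directly. The missing idea is the homogenization in Lemma~\ref{lem:reala}. Fix $\alpha=\delta/d_A\in(0,1/2]$, subtract the matched entries of $\Phi_A^+$, and rescale to $z$ on the fixed sphere $\sum z_k=0$, $\sum z_k^2=20$. The difference then becomes $\frac{\mu^4\alpha^4}{25\lambda}\bigl[A_A(z)+\alpha^{-1}B_A(z)+15(1-2\alpha)\alpha^{-2}(z_1-z_2)^2\bigr]$. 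Because $\alpha^{-1}\ge2$, the whole one-parameter family reduces to the two $\alpha$-free inequalities $B_A\ge0$ and $A_A+2B_A\ge0$. Each is settled by an exhaustive classification of constrained stationary points. Without this or an equivalent argument, neither the lower bound $E(\Psi)\ge4\sqrt2\,\mu^3/(3\lambda)$ nor the identification of $\mathsf d_V$ is established.
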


\begin{theorem}[Orbit-to-orbit global minimality: B-type kink]\label{thm:globalb}
Under \eqref{equ:parameters-b}, the kink $\Phi_B$ defined in \eqref{equ:phib} is a global minimizer of $E$ in $\mathcal A(\Sigma_B^-,\Sigma_B^+)$.  More precisely,
\[
    E(\Psi)\ge E(\Phi_B)=\frac{9\sqrt2\,\mu^3}{10h}\qquad\text{for every }\Psi\in\mathcal A(\Sigma_B^-,\Sigma_B^+).
\]
Moreover,
\[
    \mathsf d_V(\Sigma_B^-,\Sigma_B^+)=E(\Phi_B).
\]
\end{theorem}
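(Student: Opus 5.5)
We follow the distance--coarea strategy outlined in the introduction. Put $d_B=\sqrt{30}\,v$ (Lemma~\ref{lem:orbit-distance}) and $\delta(Q):=\min\{d_F(Q,\Sigma_B^-),d_F(Q,\Sigma_B^+)\}$. The plan is to find a function $g:[0,d_B/2]\to[0,\infty)$ with
\[
    V(Q)\ge g(\delta(Q))\qquad\text{for all }Q\in\mathfrak h_0(5),
\]
with equality along the segment from $\Phi_B^-$ to $\Phi_B^+$. Along that segment, Lemma~\ref{lem:minimizing-segment} gives $\delta(q_s)=d_B\min\{s,1-s\}$. So $g$ is forced: $g(t)$ is the value of $V$ on the segment at distance $t$ from the nearer endpoint. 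Since $\Phi_B$ traces this segment with a $\tanh$ profile, $V$ is quartic in $s$ there with double zeros at $s=0,1$. We therefore expect $g(t)=c\,t^2(d_B-t)^2$ for an explicit constant $c$, read off from \eqref{equ:potential} with $\lambda=-10h/9$.

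\textbf{Step 1: the lower bound for an arbitrary competitor.} Take $\Psi\in\mathcal A(\Sigma_B^-,\Sigma_B^+)$. The function $f(z)=d_F(\Psi(z),\Sigma_B^-)$ is locally absolutely continuous with $|f'|\le\lVert\Psi'\rVert_F$, since distance functions are $1$-Lipschitz. It tends to $0$ as $z\to-\infty$ and to at least $d_B$ as $z\to+\infty$. Similarly $\tilde f=d_F(\Psi,\Sigma_B^+)$ goes from $\ge d_B$ to $0$. Use $a^2+b^2\ge2ab$ and then the pointwise comparison:
\[
    E(\Psi)\ge\int 2\sqrt{V(\Psi)}\,\lVert\Psi'\rVert_F\ge\int2\sqrt{g(\delta(\Psi))}\,|\delta(\Psi)'|\,\mathrm dz.
\]
The function $\delta(\Psi(z))$ is also Lipschitz in $\Psi$. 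It rises from $0$ to $d_B/2$, because $d_F(Q,\Sigma^-)+d_F(Q,\Sigma^+)\ge d_B$ forces the two distances to cross at a value $\ge d_B/2$. It then returns to $0$. By the one-dimensional coarea (change of variables) formula, the last integral is therefore at least $2\int_0^{d_B/2}2\sqrt{g(t)}\,\mathrm dt$. The same argument applied to any absolutely continuous path $\gamma$ bounds $\mathsf d_V$ from below by the same constant.

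\textbf{Step 2: equality for $\Phi_B$.} $\Phi_B$ is the straight segment between the closest pair $\Phi_B^\mp$, reparametrized by $s(z)=(1+\tanh(\mu z/\sqrt2))/2$. We must check two things. First, equipartition $\lVert\Phi_B'\rVert_F^2=V(\Phi_B)$, which is a direct computation using $\tanh'=1-\tanh^2$. Second, that $\delta$ is monotone on each half, which holds by Lemma~\ref{lem:minimizing-segment}. Every inequality is then an equality, so $E(\Phi_B)$ equals the lower bound. Evaluating $4\int_0^{d_B/2}\sqrt c\,t(d_B-t)\,\mathrm dt=\tfrac{\sqrt c}{3}d_B^3$ should give $9\sqrt2\mu^3/(10h)$. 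The segment path shows $\mathsf d_V\le E(\Phi_B)$, and Step 1 gives the reverse inequality, so $\mathsf d_V(\Sigma_B^-,\Sigma_B^+)=E(\Phi_B)$.

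\textbf{Main obstacle: the pointwise comparison $V\ge g\circ\delta$.} By conjugation invariance, diagonalize $Q=v\,U^\dagger\diag(x)U$. Lemma~\ref{lem:orbit-distance} then gives $\delta$ exactly: $d_F(Q,\Sigma_B^+)^2=v^2\sum(x_i^\downarrow-(4,1,1,1,1)^\downarrow_{\text{ordered}})^2$, i.e.\ the ordered $x$ compared against the ordered spectrum $(1,1,1,1,-4)$, and similarly for $\Sigma_B^-$. This reduces the claim to a polynomial inequality in the ordered vector $x\in\mathbb R^5$ with $\sum x_i=0$. Since $g$ is increasing on $[0,d_B/2]$, it suffices on each region to compare with the nearer orbit.

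I would attack that inequality in four moves. First, use the decomposition from Proposition~\ref{prop:vacuum-characterization}: $V$ is a sum of $(p_2-20)^2$ and the nonnegative gap $\tfrac{13}{20}p_2^2-p_4$. Second, write $x=r\,e+w$ along the nearest extremal direction $e=(4,-1,-1,-1,-1)/\sqrt{20}$ (sign chosen appropriately), with $w\perp e$. Third, express $\delta^2$ in terms of $r$ and $w$. Fourth, show that the excess $V-g(\delta)$ is a sum-of-squares-type expression in $(r,w)$. The inequality is sharp on the whole segment, where $w=0$ and $r$ varies, so the certificate must vanish to second order along that line. I expect this to need an exact positivity certificate, checked separately for the two ordering cases.
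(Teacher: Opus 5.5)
Your outer argument is the paper's: the calibration inequality plus the one-dimensional area formula with level multiplicity at least two (Proposition~\ref{prop:criterion}), the forced profile $g(t)=c\,t^2(d_B-t)^2$ with $c=20h/27$, and the equality check via equipartition and Lemma~\ref{lem:minimizing-segment}. Your value $\tfrac{\sqrt c}{3}d_B^3=9\sqrt2\,\mu^3/(10h)$ is correct. One small fix: since $\rho_B$ can exceed $d_B/2$, extend $g$ by zero there and use only $V\ge0$ in that region, as the paper does.

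The genuine gap is the pointwise comparison $V\ge g(\rho_B)$ on $\{\rho_B<d_B/2\}$. This is the entire model-specific content of the theorem (Proposition~\ref{prop:genb}), and you only outline it. The outline also rests on a wrong picture of the equality set.

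The closest pair $\Phi_B^\mp$ does not lie on a ray through the origin. The segment is $v\,\diag(2s-1,2s-1,2s-1,4-3s,-1-3s)$, with midpoint $\tfrac52v\,\diag(0,0,0,1,-1)\neq0$. In the ordered frame near $\Sigma_B^+$, its deviation from the nearest vacuum $(1,1,1,1,-4)$ is $(1-s)(3,-2,-2,-2,3)$, which is not parallel to $(1,1,1,1,-4)$. So in your decomposition $x=re+w$ the segment has $w\neq0$.

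On your line $w=0$, which leads to the non-closest point $-\Phi_B^+$ at distance $\sqrt{80}\,v>d_B$, the inequality is strict. With $\delta=\rho_B/v\in(0,\sqrt{30}/2]$,
\[
\frac{10h}{9\mu^4}V=\frac{\delta^2(\sqrt{80}-\delta)^2}{400}>\frac{\delta^2(\sqrt{30}-\delta)^2}{150}=\frac{10h}{9\mu^4}\,g(\rho_B).
\]
Moreover, requiring the excess to vanish to second order singles out nothing. The quadratic part of $V$ in trace-zero diagonal deviations $y$ from the vacuum is $\frac{9\mu^4}{10h}\cdot\frac15\lvert y\rvert^2$, which coincides with that of $g$. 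Hence $V-g\circ\rho_B=O(\rho_B^3)$ in every direction.

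The missing idea is to exploit exactly this cancellation. In the ordered frame, write $x=(1,1,1,1,-4)+y$, so that $\rho_B=v\lvert y\rvert$ by eigenvalue matching. Scale $y=\beta z$ with $\beta=\rho_B/d_B\in(0,\tfrac12)$ and $z$ on the fixed sphere $\sum z_i=0$, $\sum z_i^2=30$. Then
\[
V-g(\rho_B)=\frac{9\mu^4}{10h}\bigl(\beta^3B_B(z)+\beta^4A_B(z)\bigr),
\]
with $B_B$ cubic and $A_B$ quartic. Because $\beta^{-1}\ge2$, it suffices to prove the two $\beta$-free inequalities $B_B\ge0$ and $A_B+2B_B\ge0$ on the sphere.

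The paper proves these as follows:
\begin{itemize}
\item It freezes $t=z_5$.
\item It reduces $z_1,\dots,z_4$ to at most two distinct values by Lagrange multipliers.
\item It certifies the resulting one-variable expressions on $\lvert t\rvert\le2\sqrt6$ through exact factorizations carrying a factor $(t-3)^2$, together with Bernstein coefficients.
\end{itemize}
The double root $t=3$ is the true equality direction $z=(3,-2,-2,-2,3)$. Without this step, or an equivalent correctly targeted certificate, your Step~1 has no input. So neither $E(\Psi)\ge E(\Phi_B)$ nor the lower bound for $\mathsf d_V$ is established.
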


\begin{theorem}[Fixed-endpoint sectors and attainment]\label{thm:fixed-endpoint-sectors}
Set
\[
    \sigma_A:=\frac{4\sqrt2\,\mu^3}{3\lambda},\qquad\sigma_B:=\frac{9\sqrt2\,\mu^3}{10h}.
\]
For $\ast\in\{A,B\}$ and arbitrary $A^-\in\Sigma_\ast^-$, $A^+\in\Sigma_\ast^+$,
\begin{equation}\label{equ:fixed-endpoint-infimum}
    \inf_{\Psi\in\mathcal A(A^-,A^+)}E(\Psi)=\sigma_\ast.
\end{equation}
Moreover, the infimum in \eqref{equ:fixed-endpoint-infimum} is attained if and only if
\begin{equation}\label{equ:closest-endpoint-condition}
    \lVert A^+-A^-\rVert_F=d_\ast.
\end{equation}
When \eqref{equ:closest-endpoint-condition} holds, the pair $(A^-,A^+)$ is a simultaneous $\SU(5)$ conjugate of $(\Phi_\ast^-,\Phi_\ast^+)$, and a translated conjugate of the explicit kink is a minimizer. If $\lVert A^+-A^-\rVert_F>d_\ast$, the infimum is not attained.
\end{theorem}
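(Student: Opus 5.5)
The argument has three parts: the lower bound, the upper bound by a recovery sequence, and the attainment dichotomy. Theorems~\ref{thm:globala} and~\ref{thm:globalb} are taken as given. The distance--coarea criterion with its rigidity statement (announced for Section~\ref{sec:criterion}) is also used; it says that equality in the coarea bound forces the endpoint representatives to form a closest pair.

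\textbf{Lower bound.} Since $\mathcal A(A^-,A^+)\subset\mathcal A(\Sigma_\ast^-,\Sigma_\ast^+)$, Theorems~\ref{thm:globala}--\ref{thm:globalb} give $E(\Psi)\ge\sigma_\ast$ for every $\Psi\in\mathcal A(A^-,A^+)$. Hence the infimum in \eqref{equ:fixed-endpoint-infimum} is at least $\sigma_\ast$.

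\textbf{Upper bound.} Fix $A^\pm$. By Lemma~\ref{lem:orbit-distance}, $A^-=U^\dagger\Phi_\ast^-U$ for some $U\in\SU(5)$. Put $B^+:=U^\dagger\Phi_\ast^+U\in\Sigma_\ast^+$; then $(A^-,B^+)$ is a closest pair, and the conjugated kink $U^\dagger\Phi_\ast U$ lies in $\mathcal A(A^-,B^+)$ with energy $\sigma_\ast$, since $V$ and the Frobenius norm are conjugation invariant. Because $\Sigma_\ast^+$ is a connected homogeneous manifold, there is a smooth path $\gamma:[0,1]\to\Sigma_\ast^+$ from $B^+$ to $A^+$, for instance $\gamma(t)=W(t)^\dagger B^+W(t)$ with $W$ a path in $\SU(5)$. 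Let $L$ be its Frobenius length. Traversing $\gamma$ over a time interval of length $T$ costs only kinetic energy $L^2/T$, because $V\equiv0$ on the orbit.

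For the glued competitor $\Psi_T$, I would take:
\begin{itemize}
\item the kink on $(-\infty,R]$;
\item a linear interpolation on $[R,R+1]$ from $\Psi(R)$ to $B^+$, whose cost is $o(1)$ as $R\to\infty$ because the kink converges exponentially and $V$ is $C^1$ with $V(B^+)=0$;
\item the slow orbit path on $[R+1,R+1+T]$;
\item the constant $A^+$ afterwards.
\end{itemize}
Then $E(\Psi_T)\le\sigma_\ast+o_R(1)+L^2/T$. Letting $R,T\to\infty$ gives the infimum $\sigma_\ast$. No modification is needed at the $-\infty$ end.

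\textbf{Attainment.} If \eqref{equ:closest-endpoint-condition} holds, I would use Lemma~\ref{lem:orbit-distance} to show that $(A^-,A^+)$ is a simultaneous conjugate of $(\Phi_\ast^-,\Phi_\ast^+)$. Write $A^-=U^\dagger\Phi^-_\ast U$. Then $\lVert\Phi_\ast^- - UA^+U^\dagger\rVert_F=d_\ast$, so $C:=UA^+U^\dagger$ is a closest point of $\Sigma_\ast^+$ to $\Phi_\ast^-$. From the equality case of the von Neumann trace inequality, $C$ commutes with $\Phi_\ast^-$ and the spectra are paired in matching order. So $C$ preserves the eigenspaces of $\Phi_\ast^-$, which are of dimensions $2,3$ in case A and $4,1$ in case B.

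In case A, $C$ must equal $u\diag(3,3,-2,-2,-2)$ up to a unitary $V$ commuting with $\Phi_\ast^-$. Here the pairing is forced: the eigenvalue $-3$ of $\Phi_A^-$ (multiplicity $2$) pairs with the largest eigenvalues of $C$ ($3$, multiplicity $2$), and the eigenvalue $2$ (multiplicity $3$) pairs with $-2$ (multiplicity $3$). Case B is analogous. Replacing $U$ by $VU$ then conjugates both endpoints simultaneously, and the translated conjugate kink is a minimizer by the lower bound.

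The converse is the hard direction. Suppose a minimizer $\Psi\in\mathcal A(A^-,A^+)$ has $E(\Psi)=\sigma_\ast$. Then every inequality in the chain (comparison $V\ge V_\ast$, Young's inequality, coarea) must be an equality. The rigidity statement should give $\lVert A^+-A^-\rVert_F=d_\ast$. If I had to prove rigidity myself, I would argue as follows. Equality in coarea forces the function $f=\min\{d(\Psi,S^-),d(\Psi,S^+)\}$ to be monotone on each side of its maximum, which is $d_\ast/2$. It also forces $\lVert\Psi'\rVert_F=|f'|$ almost everywhere, so $\Psi$ moves only in the direction of the distance gradient with unit speed along $f$. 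The total length of $\Psi$ is then exactly $d_\ast$, and $\lVert A^+-A^-\rVert_F\le\text{length}=d_\ast$.

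The main obstacle is making this equality analysis rigorous at the level of Lipschitz distance functions: $1$-Lipschitz functions give $|f'|\le\lVert\Psi'\rVert_F$, and we need equality almost everywhere together with no plateaus. The contrapositive then gives non-attainment for nonclosest pairs.
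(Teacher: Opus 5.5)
\textbf{The step that fails.} The eigenvalue pairing you give in the ``if'' direction of attainment is reversed. Equality in von Neumann's inequality pairs the two spectra in the \emph{same} decreasing order. $\Phi_A^{-}$ has ordered spectrum $u(2,2,2,-3,-3)$ and $C\in\Sigma_A^+$ has ordered spectrum $u(3,3,-2,-2,-2)$. The correct pairing is therefore $2\leftrightarrow 3,3,-2$ and $-3\leftrightarrow-2,-2$, which is exactly what the standard pair $\Phi_A^\pm$ displays entrywise.

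The pairing you wrote ($-3\leftrightarrow3$, $2\leftrightarrow-2$) is the anti-ordered one. It gives $\lVert\Phi_A^{-}-C\rVert_F^2=120u^2$, a \emph{farthest} point of $\Sigma_A^+$, not $d_A^2=20u^2$. In case B the correct pairing is $4\leftrightarrow1$ and $-1\leftrightarrow1,1,1,-4$. The analogue of your pairing ($4\leftrightarrow-4$, $-1\leftrightarrow1$) gives $80v^2$ instead of $30v^2$.

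Consequently $C$ is not scalar on each eigenspace of $\Phi_\ast^{-}$. It splits the larger one (of dimension $3$ in case A and $4$ in case B). So ``$C$ commutes with $\Phi_\ast^-$'' does not by itself identify $C$ modulo the stabilizer.

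Your conclusion is still true once you use the full equality statement. There is a common orthonormal eigenbasis in which $\Phi_\ast^-$ and $C$ are both diagonal with spectra in the same order. A permutation matrix then carries this pair to $(\Phi_\ast^-,\Phi_\ast^+)$, and a scalar phase puts the resulting unitary in $\SU(5)$.

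The paper avoids the equality case altogether (Lemma~\ref{lem:closest-pairs}). It writes $A^+=u(-2I+5P)$ and $A^-=u(2I-5Q)$ with rank-two projections, so that $\lVert A^+-A^-\rVert_F^2=20u^2+50u^2\Tr(PQ)$. A closest pair then means $PQ=0$, and ordered pairs of orthogonal subspaces of fixed dimensions form a single $\SU(5)$-orbit. This needs only the value of the orbit distance from Lemma~\ref{lem:orbit-distance}.

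\textbf{The rest agrees with the paper.} Your recovery sequence is a mild simplification of the paper's. Taking $B^+=U^\dagger\Phi_\ast^+U$ as the partner of the given $A^-$ requires only one slow orbit segment and one connector. The paper instead fixes an arbitrary closest pair $P^\pm$ and slides at both ends.

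For non-attainment you invoke Proposition~\ref{prop:sharp-rigidity}, as the paper does. You should record that its extra hypotheses hold: $V^{-1}(0)=\Sigma_\ast^-\cup\Sigma_\ast^+$ by Proposition~\ref{prop:vacuum-characterization}, and $g_\ast>0$ on $(0,d_\ast/2)$.

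The ``no plateaus'' obstacle in your sketch does not really arise. The length identity $\int_{\mathbb R}\lVert\Psi'\rVert_F\,\mathrm dz=d_\ast$ only needs $\lVert\Psi'\rVert_F=|r'|$ a.e., and this holds on each piece:
\begin{itemize}
\item on $\{0<r<d_\ast/2\}$, because $g_\ast>0$ there;
\item on $\{r=0\}$, because equipartition forces $\Psi'=0$ where $V=0$, which is what rules out sliding along the vacuum orbits;
\item $\{r\ge d_\ast/2\}$ is a null set, so it contributes nothing.
\end{itemize}
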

\begin{remark}[Interpretation of the fixed-endpoint result]
\label{rmk:fixed-endpoint-interpretation}
Theorem~\ref{thm:fixed-endpoint-sectors} separates two features of the heteroclinic problem that coincide when the wells are isolated but become distinct for positive-dimensional vacuum manifolds. First, the optimal energy depends only on the two vacuum components and not on the prescribed representatives: every fixed-endpoint sector has the same infimum $\sigma_\ast$ as the corresponding orbit-to-orbit problem. Indeed, an admissible sequence may move increasingly slowly along the zero set before and after performing the optimal transition between a closest pair of vacuum representatives. Since the potential vanishes on the vacuum orbits, the energetic cost of these tangential motions can be made arbitrarily small.

Second, attainment retains geometric information about the prescribed endpoints. A minimizing connection exists precisely when the two representatives form a closest pair of the vacuum orbits. For nonclosest endpoints, the tangential motion required to reach such a pair is pushed farther and farther toward spatial infinity along a minimizing sequence, so compactness is lost and the infimum is not attained. This phenomenon is specific to continuous vacuum manifolds and has no direct analogue in the usual point-to-point problem with isolated wells. It also illustrates why global minimization in the full matrix-valued admissible class contains information that is not detected by local or spectral stability of an individual kink.
\end{remark}

\section{Construction of the auxiliary potentials}\label{sec:auxiliary}

We now reduce the matrix-valued potential to functions of the orbit distances introduced in Section~\ref{sec:framework}. We write $d(Q,\Sigma):=d_F(Q,\Sigma)$ throughout this section.  The auxiliary potentials used below are deliberately truncated to zero once the distance from both vacuum orbits reaches one half of the orbit separation. This removes an unnecessary polynomial estimate in the middle region; the coarea lower bound only uses levels strictly below $d_*/2$.

We first treat the $A$-type kink.  The following lemma is the main model-specific estimate for this regime: among real diagonal matrices at a prescribed distance from the vacuum orbit, the potential is bounded below by its value on the segment joining a closest vacuum pair.

\begin{lemma}[Diagonal A-type comparison]\label{lem:reala}
Let $\Psi\in\mathfrak h_0(5)$ be real and diagonal.  If
\[
    d(\Psi,\Sigma_{A,\mathbb R}^{+})=\delta_A\in[0,d_A/2],\qquad\Sigma_{A,\mathbb R}^{+}:=\{P^T\Phi_A^{+}P:P\in\SO(5)\},
\]
then
\[
    V(\Psi)\ge V\left(\frac{\delta_A}{d_A}\Phi_A^{-}+\left(1-\frac{\delta_A}{d_A}\right)\Phi_A^{+}\right).
\]
\end{lemma}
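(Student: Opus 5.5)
We work with eigenvalue coordinates throughout. Write $\Psi=u\,\diag(x_1,\dots,x_5)$ with $\sum_i x_i=0$; by permutation invariance of $V$ and of the real orbit we may assume $x_1\ge\dots\ge x_5$.

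\textbf{Step 1: compute the distance.} Permutation matrices (with a sign fix) lie in $\SO(5)$, so by the same trace-maximization argument as in Lemma~\ref{lem:orbit-distance} (rearrangement for diagonal matrices) we get
\[
  \delta_A^2/u^2=\sum_i (x_i-c_i)^2,\qquad c=(3,3,-2,-2,-2).
\]
Put $y=x-c$. Then $\sum y_i=0$ and $|y|=\delta_A/u=:r\in[0,\sqrt5]$, since $d_A=2\sqrt5\,u$.

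\textbf{Step 2: the comparison point.} The comparison point is $c+t w$, where
\[
  w=(\Phi_A^- - \Phi_A^+)/u \text{ in sorted form } =(-1,-1,-1,-1,4),\qquad |w|=2\sqrt5,\qquad t=r/(2\sqrt5)\in[0,1/2].
\]
Its entries are $(3-t,3-t,-2-t,-2-t,-2+4t)$, which is the diagonal of $\Phi_A$ along the segment. Using the formula in Proposition~\ref{prop:vacuum-characterization},
\[
  \frac{\lambda}{\mu^4}V=\frac{(p_2-30)^2}{300}+\frac1{25}\Bigl(p_4-\frac7{30}p_2^2\Bigr).
\]
Evaluate this along $c+tw$. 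Since $\langle c,w\rangle=-10$, we have $p_2=30-20t+20t^2$, and $p_4$ is an explicit quartic in $t$. This gives an explicit quartic $g(t)=g(r/(2\sqrt5))$, consistent with a $\tanh$-type profile $\propto t^2(1-t)^2$.

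\textbf{Step 3: reduce to a polynomial inequality.} We must show
\[
  F(y):=\frac{\lambda}{\mu^4}V(c+y)-g\bigl(|y|/(2\sqrt5)\bigr)\ge0
\]
on the set $\{\sum y=0,\ |y|\le\sqrt5,\ c+y \text{ sorted}\}$. Expanding around $c$, the linear term vanishes because $c$ is a critical point. For the remaining terms, write $p_2=30+2\langle c,y\rangle+|y|^2$, and expand $p_4=\sum(c_i+y_i)^4$ in $y$. We fix $r$ and minimize over the sphere $|y|=r$ in the sum-zero hyperplane, which is a Lagrange problem analogous to Lemma~\ref{lem:fourth-moment}. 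By the permutation symmetry within the blocks $\{1,2\}$ and $\{3,4,5\}$, and a convexity or averaging argument (the quadratic part is block-symmetric), we try to show that the minimizer has $y$ constant on each block, or at worst splits the 3-block into $2+1$. This reduces the problem to at most three or four real parameters $(a,b,b',\dots)$ constrained by $2a+2b+b'=0$ and $|y|=r$.

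\textbf{Step 4: verify the reduced cases.} On the block-constant family, $y\propto(3,3,-2,-2,-2)$ is radial. It only rescales $c$, giving $p_4=\frac7{30}p_2^2$ and a pure $(p_2-30)^2$ term, and we check directly that this exceeds $g$. The direction $w$ gives equality. For the general reduced family, we compute the difference as a polynomial in $(t,\theta)$, with $\theta$ an angle in the reduced sphere. We then certify nonnegativity on $t\in[0,1/2]$ via a sum-of-squares or explicit factorization, as in the appendix certificates.

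\textbf{Main obstacle.} Step 3/4 is the crux. The quartic terms are not block-symmetric in a convex way, so the reduction to block-constant $y$ may fail, and non-sorted competitors near other permutations of $c$ must be excluded by the sortedness/closest-point condition. If symmetrization fails, we would instead handle the full 4-dimensional $y$ directly. In that case we would split $F=Q_2(y)+Q_3(y)+Q_4(y)$ and use $|Q_3|\le C r^3$ sharp bounds together with $r\le\sqrt5$, to obtain a one-variable polynomial inequality in $r$ that can be certified exactly.
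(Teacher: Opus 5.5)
Your Steps 1--2 agree with the paper: sorting, rearrangement matching with $c=(3,3,-2,-2,-2)$, the deviation $y$ with $\sum_i y_i=0$, and the comparison point $c+tw$. Steps 3--4, however, carry the entire content of the lemma, and they are a plan rather than a proof. The first branch is unsupported and the second provably fails.

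The reduction to $y$ constant on the blocks (or a $2+1$ split of $\{3,4,5\}$) by ``convexity or averaging'' has no mechanism behind it. $F$ is not convex on the constrained sphere, and the Lagrange conditions only limit the number of distinct values inside each block. The pattern $y_1\neq y_2$ cannot be discarded a priori: for the two polynomials the paper reduces to, there are genuine stationary points with $z_1\neq z_2$, e.g.\ $\{z_1,z_2\}=\{-2,2-3\theta\}$ with $\theta^2=\theta+1$. You also keep $r\in(0,\sqrt5]$ as a free parameter and never say how uniformity in $r$ is obtained.

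The fallback fails outright. Splitting $F$ into homogeneous parts gives
\[
F_2=\tfrac35(y_1-y_2)^2,\qquad F_4=\tfrac1{25}\Bigl(\sum_i y_i^4-\tfrac{13}{20}\lvert y\rvert^4\Bigr)\le0,
\]
where the last inequality is the upper bound in Lemma~\ref{lem:fourth-moment}. Moreover $F_2$, $F_3$ (which contains $\tfrac{2}{5\sqrt5}\lvert y\rvert^3$ from $g$) and $F_4$ all vanish along $w$. So the quadratic part degenerates on the three-dimensional subspace $\{y_1=y_2\}$, and the quartic part has minimum $-\tfrac1{60}$ on the unit sphere, attained in that subspace. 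Bounding each piece by its worst value therefore gives at best $-r^4/60<0$. The sharp inequality needs the cubic and quartic terms coupled.

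The missing idea is a monotonicity in the distance parameter. Put $\alpha=\delta_A/d_A$ and rescale $z=y/\alpha$ to the fixed sphere $\sum z_k=0$, $\sum z_k^2=20$. With $\Lambda_A^+:=\alpha\Phi_A^-+(1-\alpha)\Phi_A^+$, the paper obtains
\[
V(\Psi)-V(\Lambda_A^+)=\frac{\mu^4\alpha^4}{25\lambda}\Bigl[A_A(z)+\alpha^{-1}B_A(z)+\frac{15(1-2\alpha)}{\alpha^2}(z_1-z_2)^2\Bigr].
\]
Here $A_A=\sum z_k^4-260$, and $B_A$ collects the cubic-type terms together with $30(z_1-z_2)^2$. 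Once $B_A\ge0$ on the sphere, $\alpha^{-1}\ge2$ makes $\alpha=1/2$ the worst case. The whole $r$-family then collapses to two parameter-free statements, $B_A\ge0$ and $A_A+2B_A\ge0$.

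Each is proved by an exhaustive Lagrange classification: $z_3,z_4,z_5$ are roots of a single quadratic or cubic, and $z_1,z_2$ are either equal or satisfy one symmetric relation. Every stationary value is then checked exactly. These inequalities hold on the whole sphere, so the sortedness constraint you were worried about is never needed; it enters only in computing $\delta_A$.
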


\begin{proof}
Write
\[
    \alpha:=\frac{\delta_A}{d_A}\in[0,1/2],\qquad\Lambda_A^+:=\alpha\Phi_A^-+(1-\alpha)\Phi_A^+,
\]
and, after permuting the diagonal entries, assume that
$\Psi_1\ge\Psi_2\ge\cdots\ge\Psi_5$ in
$\Psi=u\diag(\Psi_1,\ldots,\Psi_5)$.  By the Ky Fan maximum
principle, the closest point of $\Sigma_{A,\mathbb R}^{+}$ is obtained by
matching the two entries $3u$ with the two largest eigenvalues of $\Psi$.
Hence
\[
    \delta_A^2
    =u^2\left[(\Psi_1-3)^2+(\Psi_2-3)^2
    +\sum_{k=3}^5(\Psi_k+2)^2\right].
\]
Consequently, with
\[
    \widehat\Psi_1=\Psi_1-3,\quad
    \widehat\Psi_2=\Psi_2-3,\quad
    \widehat\Psi_k=\Psi_k+2\quad(k=3,4,5),
\]
we have
\begin{equation*}
    \sum_{k=1}^5\widehat\Psi_k=0,\qquad\sum_{k=1}^5\widehat\Psi_k^2=\frac{\delta_A^2}{u^2}=\frac{\alpha^2d_A^2}{u^2}=20\alpha^2.
\end{equation*}
A direct expansion yields
\begin{align}\label{equ:A-difference-exact}
    V(\Psi)-V(\Lambda_A^+)=\frac{\mu^4}{25\lambda}\bigg[&\sum_{k=1}^5\widehat\Psi_k^4-8\sum_{k=1}^5\widehat\Psi_k^3+20(\widehat\Psi_1^3+\widehat\Psi_2^3)+15(\widehat\Psi_1-\widehat\Psi_2)^2\notag\\
    &-60\alpha^2(\widehat\Psi_1+\widehat\Psi_2)-260\alpha^4+400\alpha^3\bigg].
\end{align}
The case $\alpha=0$ is immediate.  Assume $\alpha>0$ and set $z_k:=\widehat\Psi_k/\alpha$.  Then
\begin{equation}\label{equ:A-z-sphere}
    \sum_{k=1}^5z_k=0,\qquad \sum_{k=1}^5z_k^2=20.
\end{equation}
Define
\begin{align*}
    A_A(z)&:=\sum_{k=1}^5z_k^4-260,\\
    B_A(z)&:=-8\sum_{k=1}^5z_k^3+20(z_1^3+z_2^3)+30(z_1-z_2)^2-60(z_1+z_2)+400,\\
    \mathcal H_\alpha(z)&:=A_A(z)+\frac1\alpha B_A(z).
\end{align*}
Equation~\eqref{equ:A-difference-exact} becomes
\begin{equation}\label{equ:A-difference-H}
    V(\Psi)-V(\Lambda_A^+)=\frac{\mu^4\alpha^4}{25\lambda}\left[\mathcal H_\alpha(z)+\frac{15(1-2\alpha)}{\alpha^2}(z_1-z_2)^2\right].
\end{equation}
Since $0<\alpha\le 1/2$, we have $\alpha^{-1}\ge 2$. Hence, provided that $B_A(z)\ge0$,
\[
	\mathcal H_\alpha(z)=A_A(z)+\frac1\alpha B_A(z)\ge A_A(z)+2B_A(z)=\mathcal H_{1/2}(z).
\]
Moreover, the last term in \eqref{equ:A-difference-H} is nonnegative. It is therefore sufficient to prove
\[
	B_A(z)\ge0 \qquad\text{and}\qquad \mathcal H_{1/2}(z)\ge0
\]
on the constraint set \eqref{equ:A-z-sphere}.

\smallskip
\noindent\emph{Step 1: positivity of $B_A$.}
The constraint set in \eqref{equ:A-z-sphere} is compact, and the two constraint gradients are independent.  At a stationary point of $B_A$, there are multipliers $\sigma_1,\sigma_2$ such that
\begin{equation}\label{equ:BA-stationary}
\left\{
\begin{aligned}
    -24z_k^2+2\sigma_2z_k+\sigma_1&=0,&&k=3,4,5,\\
    36z_1^2+60(z_1-z_2)+2\sigma_2z_1+\sigma_1-60&=0,\\
    36z_2^2+60(z_2-z_1)+2\sigma_2z_2+\sigma_1-60&=0.
\end{aligned}
\right.
\end{equation}
Thus $z_3,z_4,z_5$ take at most two distinct values, while either $z_1=z_2$ or $z_1\ne z_2$. Up to permutations inside the two groups, the stationary points are exhausted by the following four cases.

If $z_3=z_4=z_5=\theta$ and $z_1=z_2=r$, the constraints give
\[
    (\theta,r)=\left(-\frac{2\sqrt6}{3},\sqrt6\right),\quad\left(\frac{2\sqrt6}{3},-\sqrt6\right),
\]
and the corresponding values of $B_A$ are $200(6+\sqrt6)/3$ and $200(6-\sqrt6)/3$.

If $z_3=z_4=z_5=\theta$ and $z_1\ne z_2$, put $s=z_1+z_2$ and $p=z_1z_2$. The constraints give $s=-3\theta$ and $p=6\theta^2-10$. The remaining equation in \eqref{equ:BA-stationary} reduces exactly to
\[
    \theta^2-\theta-1=0.
\]
Moreover $-2$ is one root of $X^2+3\theta X+6\theta^2-10$, so
\[
    \{z_1,z_2\}=\{-2,2-3\theta\},\qquad\theta=\frac{1\pm\sqrt5}{2}.
\]
The two values of $B_A$ are
\[
    25(43+15\sqrt5),\qquad 25(43-15\sqrt5)>0.
\]

If $z_3=z_4=a$, $z_5=b$ with $a
e b$, and $z_1=z_2=r$, the complete solution set is
\[
    (a,b,r)=(-1,4,-1),\ (1,-4,1),\ (\sqrt5,0,-\sqrt5),\ (-\sqrt5,0,\sqrt5),
\]
with corresponding values
\[
    0,\quad 800,\quad 80(5-\sqrt5),\quad 80(5+\sqrt5).
\]

Finally, suppose $z_3=z_4=a$, $z_5=b$ with $a
e b$, and $z_1,z_2$ are distinct. Writing $s=z_1+z_2$ and $p=z_1z_2$, the stationary equations and the two constraints imply
\[
    b=10-4a,\qquad s=2a-10,\qquad p=\frac{8a^2-30a+45}{3}, \qquad \frac{50}{3}(a-3)^2=0.
\]
Thus $a=3$, $b=-2$, $s=-4$, and $p=9$, but the quadratic $X^2-sX+p$ has discriminant $s^2-4p=-20$. Hence this case has no real stationary point. We conclude that
\begin{equation}\label{equ:BA-positive}
    B_A(z)\ge0\quad\text{whenever}\quad\sum z_k=0,\quad\sum z_k^2=20.
\end{equation}
In particular, $\mathcal H_\alpha(z)\ge\mathcal H_{1/2}(z)$ for $0<\alpha\le1/2$.

\smallskip
\noindent\emph{Step 2: positivity at $\alpha=1/2$.}
Set $w=z/2$, so that $\sum w_k=0$ and $\sum w_k^2=5$. A direct calculation gives
\[
    \mathcal H_{1/2}(z)=16\mathcal K(w),
\]
where
\begin{align*}
    \mathcal K(w):={}&\sum_{k=1}^5w_k^4-8\sum_{k=1}^5w_k^3+20(w_1^3+w_2^3)+15(w_1-w_2)^2\\
    &-15(w_1+w_2)+\frac{135}{4}.
\end{align*}
At a stationary point of $\mathcal K$ on this constraint sphere, there are
multipliers $\sigma_1,\sigma_2$ such that
\begin{equation}\label{equ:K-stationary}
\left\{
\begin{aligned}
4w_k^3-24w_k^2+2\sigma_2w_k+\sigma_1&=0,
&&k=3,4,5,\\
4w_1^3+36w_1^2+30(w_1-w_2)
 +2\sigma_2w_1+\sigma_1-15&=0,\\
4w_2^3+36w_2^2+30(w_2-w_1)
 +2\sigma_2w_2+\sigma_1-15&=0.
\end{aligned}
\right.
\end{equation}
Thus $w_3,w_4,w_5$ are roots of one cubic.  Subtracting the last two
equations gives
\[
(w_1-w_2)\left[4(w_1^2+w_1w_2+w_2^2)
+36(w_1+w_2)+60+2\sigma_2\right]=0.
\]
The resulting finite classification is detailed in
Appendix~\ref{app:a-stationary}.

If $w_3=w_4=w_5=\theta$ and $w_1=w_2=r$, then
\[
    (\theta,r)=\left(-\sqrt{\frac23},\sqrt{\frac32}\right),\quad\left(\sqrt{\frac23},-\sqrt{\frac32}\right),
\]
and
\[
    \mathcal K=\frac{25}{12}(19+4\sqrt6),\qquad\mathcal K=\frac{25}{12}(19-4\sqrt6).
\]

If $w_3=w_4=w_5=\theta$ and $w_1\ne w_2$, elimination of $w_1+w_2$ and $w_1w_2$ gives
\begin{equation}\label{equ:K-case2-cubic}
    q(\theta):=16\theta^3-60\theta^2+26\theta+15=0,\qquad 10-15\theta^2\ge0.
\end{equation}
On $[-\sqrt{2/3},0]$ we have $q'>0$, while $q(-1/3)<0<q(-3/10)$.  On $[0,\sqrt{2/3}]$, the only critical point is a local maximum, and
\[
    q(0)=15,\qquad q(\sqrt{2/3})=-25+\frac{110\sqrt6}{9}>0.
\]
Hence \eqref{equ:K-case2-cubic} has exactly one admissible root $\theta_0\in(-1/3,-3/10)$. The two special coordinates are the roots of
\[
    X^2+3\theta_0X+6\theta_0^2-\frac52=0.
\]
Reduction modulo $q(\theta_0)=0$ gives the exact value
\[
    \mathcal K=\frac{735}{4}\theta_0^2-\frac{2325}{8}\theta_0+\frac{2015}{16}>0.
\]
The positivity is immediate from $\theta_0<0$. This replaces the decimal
stationary points by an exact elimination and root-isolation argument.

If $w_3=w_4=a$, $w_5=b$ with $a
e b$, and $w_1=w_2=r$, the complete solution set is
\[
    (a,b,r)=\left(-\frac12,2,-\frac12\right),\left(\frac12,-2,\frac12\right),\left(\frac{\sqrt5}{2},0,-\frac{\sqrt5}{2}\right),\left(-\frac{\sqrt5}{2},0,\frac{\sqrt5}{2}\right),
\]
with corresponding values
\[
    0,\quad100,\quad10(4-\sqrt5),\quad10(4+\sqrt5).
\]

If $w_3=w_4=a$, $w_5=b$ with $a
e b$, and $w_1,w_2$ are distinct, the stationary equations imply
\begin{equation*}
    2ab-2b^2-24a-6b+35=0.
\end{equation*}
Since $|b|\le\sqrt5<12$, this determines $a=(2b^2+6b-35)/(2b-24)$. If $s=w_1+w_2$ and $p=w_1w_2$, the discriminant of the quadratic with roots $w_1,w_2$ then equals
\[
    s^2-4p=-\frac{5(3b^4-12b^3+48b+202)}{(b-12)^2}.
\]
For $|b|\le\sqrt5$,
\[
    3b^4-12b^3+48b+202=3(b^2-2b)^2-12(b-2)^2+250\ge142-48\sqrt5>0.
\]
Thus this pattern has no real stationary point. If $w_3,w_4,w_5$ are all distinct, they are the three roots of $4X^3-24X^2+2\sigma_2X+\sigma_1$, and hence their sum is $6$. Cauchy's inequality would give $w_3^2+w_4^2+w_5^2\ge12>5$, which is impossible.

All stationary values of $\mathcal K$ are therefore nonnegative. Since the constraint set is compact, $\mathcal K\ge0$. Combining this with \eqref{equ:BA-positive} and \eqref{equ:A-difference-H} proves the lemma.
\end{proof}

Conjugation invariance and optimal eigenvalue matching lift the preceding diagonal estimate to the full matrix space. The resulting auxiliary potential is precisely the input required by the abstract criterion in Section~\ref{sec:criterion}.

\begin{proposition}[Global A-type orbit-distance comparison]\label{prop:gena}
For $Q\in\mathfrak h_0(5)$ define
\begin{equation}\label{equ:def-VA}
V_A(Q):=
\begin{cases}
\displaystyle
V\left(\frac{\delta}{d_A}\Phi_A^-+\left(1-\frac{\delta}{d_A}\right)\Phi_A^+\right), &d(Q,\Sigma_A^+)=\delta<d_A/2,\\[6pt]
\displaystyle
V\left(\frac{\delta}{d_A}\Phi_A^++\left(1-\frac{\delta}{d_A}\right)\Phi_A^-\right),&d(Q,\Sigma_A^-)=\delta<d_A/2,\\[6pt]
0,&d(Q,\Sigma_A^+)\ge d_A/2\text{ and }d(Q,\Sigma_A^-)\ge d_A/2.
\end{cases}
\end{equation}
Then $V(Q)\ge V_A(Q)$ for every $Q\in\mathfrak h_0(5)$.
\end{proposition}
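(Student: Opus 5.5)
The plan is to reduce the matrix statement to Lemma~\ref{lem:reala} by conjugation invariance. First we check that $V_A$ is well defined. The first two cases of \eqref{equ:def-VA} cannot hold simultaneously, since $d(Q,\Sigma_A^+)<d_A/2$ and $d(Q,\Sigma_A^-)<d_A/2$ would give $d_F(\Sigma_A^-,\Sigma_A^+)<d_A$ by the triangle inequality, contradicting Lemma~\ref{lem:orbit-distance}. In the third case $V_A=0\le V(Q)$ by Proposition~\ref{prop:vacuum-characterization}, so only the first two cases need an argument.

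\emph{Case $d(Q,\Sigma_A^+)=\delta<d_A/2$.} Diagonalize $Q=U^\dagger DU$ with $D$ real diagonal and $U\in\SU(5)$ (a phase can be absorbed into $U$ without changing the conjugation). Then $V(Q)=V(D)$, because $V$ depends only on $\Tr(Q^2)$ and $\Tr(Q^4)$. Also $d(Q,\Sigma_A^+)=d(D,\Sigma_A^+)$, because $\Sigma_A^+$ is conjugation invariant. We then need to show
\[
d(D,\Sigma_A^+)=d(D,\Sigma_{A,\mathbb R}^+).
\]
The inequality $\le$ is trivial since $\Sigma_{A,\mathbb R}^+\subset\Sigma_A^+$. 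For $\ge$, Lemma~\ref{lem:orbit-distance} gives $d(D,\Sigma_A^+)^2=\sum_i(a_i-b_i)^2$ with both spectra ordered decreasingly. This value is attained by a permutation conjugate of $\Phi_A^+$, that is, by a real diagonal matrix $P^T\Phi_A^+P$. A permutation matrix may have determinant $-1$, but composing it with the sign change $\diag(-1,1,1,1,1)$ does not change the conjugate of a diagonal matrix, so we can take $P\in\SO(5)$. Hence $D$ satisfies the hypothesis of Lemma~\ref{lem:reala} with $\delta_A=\delta\in[0,d_A/2)$, and that lemma yields $V(Q)=V(D)\ge V_A(Q)$.

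\emph{Case $d(Q,\Sigma_A^-)=\delta<d_A/2$.} We reduce it to the previous case using the symmetry $Q\mapsto -Q$. The potential is even, so $V(-Q)=V(Q)$. Next, $-\Phi_A^+=u\diag(-3,-3,2,2,2)$ is a permutation of $\Phi_A^-=u\diag(2,2,-3,-3,2)$, so $-\Sigma_A^+=\Sigma_A^-$ and $-\Sigma_A^-=\Sigma_A^+$. Therefore $d(-Q,\Sigma_A^+)=\delta$, and the first case gives
\[
V(Q)=V(-Q)\ge V\bigl(\tfrac{\delta}{d_A}\Phi_A^-+(1-\tfrac{\delta}{d_A})\Phi_A^+\bigr).
\]
Negating the argument, we must identify the resulting value with $V\bigl(\tfrac{\delta}{d_A}\Phi_A^++(1-\tfrac{\delta}{d_A})\Phi_A^-\bigr)$. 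In the negated expression, $-\Phi_A^\pm$ is a conjugate of $\Phi_A^\mp$, but not necessarily by the same unitary for both endpoints. So we find a single permutation $\pi$ with $\pi(-\Phi_A^-)\pi^{-1}=\Phi_A^+$ and $\pi(-\Phi_A^+)\pi^{-1}=\Phi_A^-$:
\begin{itemize}
\item $-\Phi_A^-=u\diag(-2,-2,3,3,-2)$ is mapped to $u\diag(3,3,-2,-2,-2)$ by swapping coordinates $(1,3)$ and $(2,4)$;
\item the same swap maps $-\Phi_A^+=u\diag(-3,-3,2,2,2)$ to $u\diag(2,2,-3,-3,2)=\Phi_A^-$.
\end{itemize}
This simultaneous conjugacy maps the whole segment to the desired one, and conjugation invariance of $V$ concludes the case.

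The main obstacle is the lifting step in the first case: verifying that the minimal Frobenius distance to the complex unitary orbit is realized within the real orthogonal orbit, so that the hypothesis of Lemma~\ref{lem:reala} is met. This is settled by the Hoffman--Wielandt identity \eqref{equ:unitary-orbit-distance} together with the permutation argument above; everything else is bookkeeping.
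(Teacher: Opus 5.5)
Your proposal is correct and follows the paper's proof: the two near-orbit cases are disjoint by the triangle inequality, the third case uses $V\ge0$, the first case reduces to Lemma~\ref{lem:reala} via unitary diagonalization and eigenvalue matching, and the second case follows from $Q\mapsto -Q$. Your two additions only make explicit what the paper leaves implicit: choosing the permutation in $\SO(5)$, and using a single simultaneous permutation to show that $V\bigl(\alpha\Phi_A^-+(1-\alpha)\Phi_A^+\bigr)=V\bigl(\alpha\Phi_A^++(1-\alpha)\Phi_A^-\bigr)$.
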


\begin{proof}
The two first cases in \eqref{equ:def-VA} cannot overlap, by the triangle inequality and $d(\Sigma_A^-,\Sigma_A^+)=d_A$. If $Q$ is diagonal, eigenvalue matching shows that its distance to the full unitary orbit $\Sigma_A^+$ equals its distance to $\Sigma_{A,\mathbb R}^+$: both are obtained by matching the two largest eigenvalues of $Q$ with the two entries $3u$ of $\Phi_A^+$. Unitary diagonalization, conjugation invariance, and Lemma~\ref{lem:reala} therefore prove the first case. The second follows by applying the first to $-Q$, since $V(-Q)=V(Q)$ and $\Sigma_A^-=-\Sigma_A^+$.

In the remaining region $V_A=0$, while $V\ge0$ follows from Proposition~\ref{prop:vacuum-characterization}. This proves the proposition without a separate middle-region polynomial estimate.
\end{proof}

We next treat the $B$-type kink. In this regime the comparison can be proved directly after diagonalization. Although the algebraic positivity argument differs from the A-type case, the output has the same geometric form.

\begin{proposition}[Global B-type orbit-distance comparison]\label{prop:genb}
For $Q\in\mathfrak h_0(5)$ define
\begin{equation}\label{equ:def-VB}
    V_B(Q):=
\begin{cases}
\displaystyle
V\left(\frac{\delta}{d_B}\Phi_B^-+
 \left(1-\frac{\delta}{d_B}\right)\Phi_B^+\right),
&d(Q,\Sigma_B^+)=\delta<d_B/2,\\[6pt]
\displaystyle
V\left(\frac{\delta}{d_B}\Phi_B^++
 \left(1-\frac{\delta}{d_B}\right)\Phi_B^-\right),
&d(Q,\Sigma_B^-)=\delta<d_B/2,\\[6pt]
0,&d(Q,\Sigma_B^+)\ge d_B/2
   \text{ and }d(Q,\Sigma_B^-)\ge d_B/2.
\end{cases}
\end{equation}
Then $V(Q)\ge V_B(Q)$ for every $Q\in\mathfrak h_0(5)$.
\end{proposition}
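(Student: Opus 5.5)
We follow the proof of Proposition~\ref{prop:gena}. The first two cases of \eqref{equ:def-VB} cannot overlap, because the triangle inequality together with $d_F(\Sigma_B^-,\Sigma_B^+)=d_B$ excludes this. In the third region $V_B=0\le V$ by Proposition~\ref{prop:vacuum-characterization}. Since $V(-Q)=V(Q)$ and $\Sigma_B^-=-\Sigma_B^+$, the second case follows from the first applied to $-Q$. It therefore suffices to treat $d(Q,\Sigma_B^+)=\delta<d_B/2$. Both $V$ and $d(\cdot,\Sigma_B^+)$ are conjugation invariant, so we diagonalize and write $Q=v\,\diag(x_1,\dots,x_5)$ with $x_1\ge\cdots\ge x_5$ and $\sum x_i=0$. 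By Lemma~\ref{lem:orbit-distance} the closest point of $\Sigma_B^+$ matches the eigenvalue $-4v$ with the smallest eigenvalue $x_5$. Setting $\widehat x_k=x_k-1$ for $k\le4$ and $\widehat x_5=x_5+4$, we get $\sum\widehat x_k=0$ and $\sum\widehat x_k^2=\delta^2/v^2=30\beta^2$, where $\beta:=\delta/d_B\in[0,1/2)$.

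Next we expand exactly. The comparison point is $\Lambda_B^+=\beta\Phi_B^-+(1-\beta)\Phi_B^+$, which is $v\,\diag(1-2\beta,1-2\beta,1-2\beta,1+3\beta,-4+3\beta)$ after reordering. This is unitarily equivalent to the point whose displacement from $\Phi_B^+$ is $\beta\cdot(-2,-2,-2,3,3)$, matched as in Lemma~\ref{lem:minimizing-segment}. Using the normalized form of $V$ from the proof of Proposition~\ref{prop:vacuum-characterization}, we write
\[
\tfrac{10h}{9\mu^4}\bigl(V(Q)-V(\Lambda_B^+)\bigr)
\]
as a polynomial in $\widehat x$ and $\beta$. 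On the sphere $\sum\widehat x_k^2=30\beta^2$, the quadratic part of $V$ in $\widehat x$, namely the Hessian at $\Phi_B^+$, reduces to a multiple of $\beta^2$. We rescale $z_k=\widehat x_k/\beta$, so that $\sum z_k=0$ and $\sum z_k^2=30$. Then the difference takes the form $\beta^3\bigl(B_B(z)+\beta A_B(z)\bigr)$. Here $B_B$ is cubic in $z$, built from $\sum z_k^3$ and the distinguished coordinate $z_5$, and $A_B=\sum z_k^4-c$, with the constant chosen so that both $A_B$ and $B_B$ vanish at $z=(-2,-2,-2,3,3)$ up to permutation. Any leftover terms that are nonnegative, analogous to the $(z_1-z_2)^2$ term in the A-type case, are set aside.

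We then reduce to the endpoint $\beta=1/2$. If $B_B\ge0$ on the constraint set, then writing $A_B+\beta^{-1}B_B\ge A_B+2B_B$ reduces the claim to nonnegativity of $\mathcal H^B_{1/2}=A_B+2B_B$, exactly as in Lemma~\ref{lem:reala}. We prove both inequalities by Lagrange multipliers on the compact set $\{\sum z_k=0,\ \sum z_k^2=30\}$. The symmetry here is better than in the A-type case: only $z_5$ is distinguished, and $z_1,\dots,z_4$ all satisfy one stationarity polynomial. For $B_B$ this polynomial is quadratic, so $z_1,\dots,z_4$ take at most two values. For $\mathcal H^B_{1/2}$ it is a cubic with fixed quadratic coefficient, so $z_1,\dots,z_4$ take at most three values, whose sum is fixed whenever all three occur. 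This leaves a small list of multiplicity patterns: $(4)$, $(3,1)$, $(2,2)$, $(2,1,1)$, and, for the quartic, possibly three distinct values. For each pattern we solve the constraints in closed form and evaluate the function. As in Lemma~\ref{lem:reala}, we exclude the three-distinct-values case by a Cauchy--Schwarz argument, and we exclude any remaining patterns by negative discriminants.

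The main obstacle is this stationary-point classification for $\mathcal H^B_{1/2}$. Some pattern may lead to a cubic or quartic in one variable with irrational roots. At that step we would first try exact root isolation and reduction of $\mathcal H^B_{1/2}$ modulo the minimal polynomial, as was done for $\theta_0$ in Lemma~\ref{lem:reala}. If that becomes unwieldy, a fallback is a sum-of-squares certificate for $\mathcal H^B_{1/2}$ on the sphere, recorded in the appendix. Finally, the minimum value $0$ is attained along the segment itself, which confirms that the bound is sharp and cannot be relaxed.
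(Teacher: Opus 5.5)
Your reduction is the same as the paper's. You diagonalize, pair $x_5$ with $-4$, and rescale to $\sum z_k=0$, $\sum z_k^2=30$. You then write $V(Q)-V(\Lambda_B^+)$ as a positive multiple of $\beta^4\bigl(A_B+\beta^{-1}B_B\bigr)$ and reduce to $B_B\ge0$ and $A_B+2B_B\ge0$. There is indeed no leftover term: the quadratic part of the normalized potential is exactly $\frac15\sum\widehat x_k^2$.

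Where you differ is in proving these two constrained inequalities. The paper fixes $t=z_5$ and optimizes only over $z_1,\dots,z_4$. It then shows that every branch value ($C_1,C_2,C_3$ and $D_1,D_2,D_3$, some containing $(24-t^2)^{3/2}$) is nonnegative for all admissible $t$. This uses squaring identities and Bernstein certificates, plus a separate check at the degenerate endpoints $|t|=2\sqrt6$, and it never has to locate stationary points in $t$. You also impose stationarity in $z_5$, on a smooth compact $3$-sphere whose constraint gradients are independent everywhere, and get a finite candidate list.

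Your route is quite tractable here. Write the $z_5$-equation as $(z_5-a)(z_5-b)(z_5-c)=30(z_5^2-9)$ with $c=-6-a-b$ for $A_B+2B_B$, and as $(z_5-a)(z_5-b)=5(z_5^2-9)$ for $B_B$. On the constraint ellipse $6a^2+3az_5+z_5^2=15$, the pattern $(a,a,a,b,z_5)$ then factors as $a(2z_5+3a)(a-z_5+15)=0$, respectively $a(2z_5+3a)=0$. This leaves only $\pm(-2,-2,-2,3,3)$ and $(0,0,0,\mp\sqrt{15},\pm\sqrt{15})$, besides the poles $z_5=\pm2\sqrt6$. For $B_B$ the $(2,2)$ pattern gives $z_5^2=12$, so all its stationary values are explicit, and $0$ occurs only at the segment point. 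The one irrational step is the $(2,2)$ pattern for $A_B+2B_B$. There $z_5$ solves $z_5^3-20z_5^2-4z_5+240=0$, and you still need the paper's quartic $R_2$ to be positive at the two admissible roots, which is exactly the obstacle you anticipated. The paper's slicing buys uniform, mechanically checkable certificates; yours buys a shorter case list and avoids the surds.

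Three corrections:
\begin{itemize}
\item Since $\lambda=-10h/9<0$, the quartic part is $A_B=\frac{21}{10}-\frac1{100}\sum z_k^4$, opposite in sign to your $\sum z_k^4-c$. This is harmless for the step $A_B+\beta^{-1}B_B\ge A_B+2B_B$, but the quartic term works against you.
\item The Cauchy--Schwarz exclusion from Lemma~\ref{lem:reala} does not transfer verbatim. Three distinct roots among four coordinates force a repeated value, and $a+b+c=-6$ alone is compatible with $\sum z_k^2=30$. You need the moment constraints, which give $a=6-z_5$, $b+c=z_5-12$, and $(b-c)^2=-7z_5^2+72z_5-228<0$.
\item ``Vanish at $(-2,-2,-2,3,3)$ up to permutation'' should mean permutations of $z_1,\dots,z_4$ only, since $B_B$ singles out $z_5$.
\end{itemize}
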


\begin{proof}
As before, the two near-orbit cases are disjoint and unitary invariance reduces the proof to a real diagonal matrix. Write $Q=v\diag(x_1,\ldots,x_5)$ with $x_1\ge\cdots\ge x_5$ and $\sum x_i=0$. Set $\Sigma_{B,\mathbb R}^+:=\{P^T\Phi_B^+P:P\in\SO(5)\}$. Eigenvalue matching gives
\[
    d(Q,\Sigma_B^+)^2=d(Q,\Sigma_{B,\mathbb R}^+)^2=v^2\left(\sum_{i=1}^4(x_i-1)^2+(x_5+4)^2\right).
\]
Indeed, both minima pair the smallest eigenvalue $x_5$ with the entry $-4$ and the remaining four eigenvalues with the repeated entry $1$. The case $d(Q,\Sigma_B^+)=0$ is immediate.  Otherwise let
\[
    \beta:=\frac{d(Q,\Sigma_B^+)}{d_B}\in(0,1/2),\qquad y_i:=x_i-1\ (i\le4),\quad y_5:=x_5+4,\qquad z_i:=\frac{y_i}{\beta}.
\]
Then
\begin{equation}\label{equ:B-z-sphere}
    \sum_{i=1}^5z_i=0,\qquad \sum_{i=1}^5z_i^2=30.
\end{equation}
A direct expansion, with $\Lambda_B^+:=\beta\Phi_B^-+(1-\beta)\Phi_B^+$, gives
\begin{equation*}
    V(Q)-V(\Lambda_B^+)=\frac{9\mu^4}{10h}\,\beta^4\left(A_B(z)+\frac1\beta B_B(z)\right),
\end{equation*}
where
\begin{align*}
    A_B(z):=\frac{21}{10}-\frac1{100}\sum_{i=1}^5z_i^4,\quad B_B(z):=12-\frac{27}{5}z_5+\frac15z_5^3-\frac1{25}\sum_{i=1}^5z_i^3.
\end{align*}
We prove, using exact one-variable certificates, that
\begin{equation}\label{equ:BB-two-claims}
    B_B(z)\ge0,\qquad A_B(z)+2B_B(z)\ge0
\end{equation}
under \eqref{equ:B-z-sphere}. These inequalities imply $A_B+\beta^{-1}B_B\ge A_B+2B_B\ge0$.

Put $t:=z_5$. From the constraints on $z_1,\ldots,z_4$,
\begin{equation}\label{equ:t-range}
    |t|\le2\sqrt6.
\end{equation}
At the two endpoints $|t|=2\sqrt6$, the first four coordinates are all $-t/4$, and the estimates below follow directly (equivalently, by continuity from the interior). We may therefore assume $|t|<2\sqrt6$, where the two fixed-moment constraint gradients are independent. First,
\[
    25B_B=300-135t+4t^3-\sum_{i=1}^4z_i^3.
\]
For fixed $t$, an extremum of $\sum_{i=1}^4z_i^3$ under fixed first and second moments has at most two distinct coordinates. For multiplicities $1+3$, $2+2$, and $3+1$, direct substitution gives the following possible values of $25B_B$:
\begin{align}
    C_1(t)&=\frac5{24}\left[L_0(t)-\sqrt{15}(24-t^2)^{3/2}\right],\label{equ:C1}\\
    C_2(t)&=\frac{25}{8}\left(t^3-36t+96\right),\label{equ:C2}\\
    C_3(t)&=\frac5{24}\left[L_0(t)+\sqrt{15}(24-t^2)^{3/2}\right],\label{equ:C3}
\end{align}
where $L_0(t)=15(t^3-36t+96)$. The minimum of the cubic $t^3-36t+96$ on \eqref{equ:t-range} is $96-48\sqrt3>0$, attained at $t=2\sqrt3$. Moreover,
\begin{equation}\label{equ:L0-square}
    L_0(t)^2-15(24-t^2)^3=240(t-3)^2Q_4(t),
\end{equation}
where
\[
    Q_4(t)=t^4+6t^3-45t^2-144t+864=(24-t^2)^2+3(2t^3+t^2-48t+96).
\]
The last cubic has its minimum $352/27$ on $[-2\sqrt6,2\sqrt6]$. Hence $Q_4>0$; since $L_0>0$, \eqref{equ:L0-square} proves $C_1\ge0$. Equations \eqref{equ:C2}--\eqref{equ:C3} are then nonnegative as well. Thus $B_B\ge0$.

It remains to prove the second inequality in \eqref{equ:BB-two-claims}.
Set
\begin{equation*}
    D(z):=100(A_B(z)+2B_B(z))=2610-\sum_{i=1}^5z_i^4-8\sum_{i=1}^5z_i^3+40t^3-1080t.
\end{equation*}
For fixed $t$, minimizing $D$ is equivalent to maximizing $\sum_{i=1}^4\phi(z_i)$ with $\phi(s)=s^4+8s^3$ under fixed first and second moments. The Lagrange equation is cubic. If three distinct roots $a,b,c$ occur among four coordinates, one of them, say $a$, is repeated. The three roots of the cubic have sum $-6$; the moment constraints then give
\[
    a=6-t,\qquad b+c=t-12,\qquad bc=2t^2-24t+93.
\]
The discriminant of the quadratic with roots $b$ and $c$ is
\[
    \Delta(t):=(b-c)^2=(b+c)^2-4bc=-7t^2+72t-228.
\]
Since $\Delta'(t)=72-14t>0$ on \eqref{equ:t-range},
\[
    \Delta(t)\le \Delta(2\sqrt6)=144\sqrt6-396<0.
\]
Thus $b$ and $c$ cannot both be real, and this case is impossible. Hence only the multiplicities $1+3$, $2+2$, and $3+1$ occur. The corresponding values of $D$ are
\begin{align}
    D_1(t)&=\frac5{24}\left[L_1(t)-\sqrt{15}(8-t)(24-t^2)^{3/2}\right],\label{equ:D1}\\
    D_2(t)&=\frac5{16}R_2(t),\label{equ:D2}\\
    D_3(t)&=\frac5{24}\left[L_1(t)+\sqrt{15}(8-t)(24-t^2)^{3/2}\right],\label{equ:D3}
\end{align}
where
\begin{align*}
    L_1(t)&=-7t^4+120t^3+156t^2-4320t+10008,\\
    R_2(t)&=-3t^4+80t^3+24t^2-2880t+7632.
\end{align*}
The exact Bernstein certificates in Appendix~\ref{app:certificates} show that $L_1$, $R_2$, and
\[
    Q_6(t):=t^6-24t^5+36t^4+2232t^3-5616t^2-43794t+150849
\]
are positive on $[-5,5]$.  Finally,
\begin{equation}\label{equ:L1-square}
    L_1(t)^2-15(8-t)^2(24-t^2)^3=64(t-3)^2Q_6(t).
\end{equation}
Since $8-t>0$ on \eqref{equ:t-range}, equations \eqref{equ:D1} and \eqref{equ:L1-square} give $D_1\ge0$. Also $D_3\ge D_1$ and $D_2>0$. Therefore $D\ge0$, proving \eqref{equ:BB-two-claims} and hence the first branch of \eqref{equ:def-VB}. The second branch follows from $Q\mapsto-Q$.

In the remaining region $V_B=0$, while $V\ge0$ follows from Proposition~\ref{prop:vacuum-characterization}. This completes the proof.
\end{proof}

\section{The distance--coarea lower bound and rigidity}
\label{sec:criterion}
We isolate the part of the argument that does not depend on the detailed form of the $\SU(5)$ potential. This is the model-independent variational core of the paper: it converts a near-vacuum lower bound expressed through distance into a sharp global action estimate. Let $\mathcal H$ be a finite-dimensional real Hilbert space, and let $S^-$ and $S^+$ be disjoint nonempty compact subsets of $\mathcal H$. Set
\[
    D:=d(S^-,S^+)>0,\qquad\rho(q):=\min\{d(q,S^-),d(q,S^+)\}.
\]

The purpose of the following criterion is to reduce the Hilbert-space-valued connection problem to the variation of the single scalar function $\rho\circ\Psi$. The calibration inequality controls the energy by the weighted total variation of this distance function, while the one-dimensional coarea formula records how many times the connection crosses each distance level. Since every path joining $S^-$ to $S^+$ must move from distance $0$ to at least $D/2$ and return to distance $0$, each intermediate level is crossed at least twice. This elementary topological fact is the source of the factor $4$ in the sharp lower bound.

\begin{proposition}[Distance--coarea criterion]\label{prop:criterion}
Let $W:\mathcal H\to[0,\infty)$ be continuous, and let $g:[0,D/2)\to[0,\infty)$ be Borel measurable and locally bounded. Suppose that
\begin{equation}\label{equ:local-distance-lower-bound}
    W(q)\ge g(\rho(q))\qquad\text{whenever }\rho(q)<D/2.
\end{equation}
Then every $\Psi\in H^1_{\mathrm{loc}}(\mathbb R;\mathcal H)$ with finite energy and limits $\Psi(-\infty)\in S^-$ and $\Psi(+\infty)\in S^+$ satisfies
\begin{equation}\label{equ:general-lower-bound}
    \int_{\mathbb R}\bigl(\lVert\Psi'\rVert^2+W(\Psi)\bigr)\,\mathrm dz\ge 4\int_0^{D/2}\sqrt{g(s)}\,\mathrm ds.
\end{equation}
Equality holds if, in addition,
\[
    W(\Psi)=g(\rho(\Psi))\quad\text{for a.e. }z\text{ with }\rho(\Psi)<D/2,
\]
\[
    \lVert\Psi'\rVert^2=W(\Psi),\qquad|(\rho\circ\Psi)'|=\lVert\Psi'\rVert\quad\text{a.e.},
\]
and $\rho\circ\Psi$ increases from $0$ to $D/2$ and then decreases to $0$, with its $D/2$-level set of measure zero.
\end{proposition}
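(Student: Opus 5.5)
The plan is to reduce the vector-valued problem to the scalar function $f:=\rho\circ\Psi$ and then count crossings of distance levels. First, $\rho$ is $1$-Lipschitz on $\mathcal H$, since it is the minimum of two distance functions. Hence $f\in H^1_{\mathrm{loc}}(\mathbb R)$ is locally absolutely continuous and satisfies $|f'|\le\lVert\Psi'\rVert$ a.e. Since $S^\pm$ are compact and disjoint, $\Psi(\pm\infty)\in S^\pm$ gives $f(z)\to0$ as $z\to\pm\infty$. Set $G(t):=\int_0^t\sqrt{g(s)}\,\mathrm ds$ for $t\in[0,D/2]$. Local boundedness makes $G$ finite on $[0,D/2)$; if $\int_0^{D/2}\sqrt g=\infty$, the argument below still applies after truncating at levels $t<D/2$.

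Next I define $U:=\{z:f(z)<D/2\}$ and use the pointwise calibration. On $U$, the hypothesis \eqref{equ:local-distance-lower-bound} combined with AM--GM gives
\[
\lVert\Psi'\rVert^2+W(\Psi)\ge2\sqrt{W(\Psi)}\,\lVert\Psi'\rVert\ge2\sqrt{g(f)}\,|f'|.
\]
Off $U$ the integrand is nonnegative, so it can be dropped. This yields $E\ge2\int_U\sqrt{g(f(z))}\,|f'(z)|\,\mathrm dz$.

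The key topological step is that some point has $f\ge D/2$. The continuous function $z\mapsto d(\Psi(z),S^-)-d(\Psi(z),S^+)$ goes from $-D'$ to $+D''$, where $D',D''\ge D$, so it vanishes at some $z_0$. At $z_0$, the triangle inequality gives $2d(\Psi(z_0),S^\pm)\ge D$, hence $f(z_0)\ge D/2$. I would then apply the one-dimensional coarea (area) formula for absolutely continuous $f$ on $U$:
\[
\int_U\sqrt{g(f)}\,|f'|\,\mathrm dz=\int_0^{D/2}\sqrt{g(t)}\,\#\{z:f(z)=t\}\,\mathrm dt.
\]
For each $t\in(0,D/2)$, the intermediate value theorem on $(-\infty,z_0]$ and $[z_0,\infty)$ gives at least two preimages, so the right-hand side is at least $2\int_0^{D/2}\sqrt g$. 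Combined with the previous paragraph, this proves \eqref{equ:general-lower-bound}. The main technical point is justifying the area formula on the possibly infinite open set $U$ with only Borel and locally bounded $g$. To handle it, I would first apply the formula on compact subintervals and pass to the limit by monotone convergence, using the standard fact that $f$ maps the null set $\{f'\ne0\}^c$ correctly (Banach indicatrix) for absolutely continuous functions.

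For the equality statement, I would check that each inequality becomes an equality. Equipartition and the identity $W=g(\rho)$ turn the AM--GM step into an equality on $U$. The condition $|f'|=\lVert\Psi'\rVert$ turns the Lipschitz step into an equality. Monotonicity of $f$ up to $D/2$ and then down gives exactly two preimages of each level. Finally, $\{f=D/2\}$ has measure zero, and $f\le D/2$ under the monotone hypothesis, so the region outside $U$ contributes nothing, since $\lVert\Psi'\rVert^2=W=0$ a.e. there, or simply because it is null. Therefore $E=2\cdot2\int_0^{D/2}\sqrt g$.
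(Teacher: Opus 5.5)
Your proposal is correct and is essentially the paper's proof. It uses the same ingredients:
\begin{itemize}
\item the calibration $a^2+b^2\ge 2ab$ together with the $1$-Lipschitz bound $|(\rho\circ\Psi)'|\le\lVert\Psi'\rVert$;
\item a point $z_0$ with $\rho(\Psi(z_0))\ge D/2$, obtained from the sign change of $d(\Psi,S^-)-d(\Psi,S^+)$;
\item the one-dimensional area formula with multiplicity on $[-R,R]$, followed by monotone convergence.
\end{itemize}
Your restriction to $U=\{f<D/2\}$ does the same job as the paper's extension of $g$ by zero on $[D/2,\infty)$.

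There are two small slips. First, your parenthetical justification of the area formula is misstated: the set $\{f'\ne0\}^c$ need not be null. What you should invoke, for locally absolutely continuous $f$, is Lusin's (N) property and the nullity of $f(\{f'=0\})$. Second, in the equality case monotonicity gives exactly two preimages only for almost every level, not every level; almost every level is all the argument needs.
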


\begin{proof}
Extend $g$ to a nonnegative Borel function $\widetilde g$ on $[0,\infty)$ by setting $\widetilde g(s)=g(s)$ for $s<D/2$ and $\widetilde g(s)=0$ for $s\ge D/2$. Then $W(q)\ge\widetilde g(\rho(q))$ for every $q\in\mathcal H$. The distance to a nonempty closed set is $1$-Lipschitz, hence $\rho\circ\Psi$ is locally absolutely continuous and
\[
    |(\rho\circ\Psi)'|\le\lVert\Psi'\rVert\quad\text{a.e.}
\]
The elementary inequality $a^2+b^2\ge2ab$ gives
\begin{equation}\label{equ:calibration}
    \int_{\mathbb R}\bigl(\lVert\Psi'\rVert^2+W(\Psi)\bigr)\,\mathrm dz\ge 2\int_{\mathbb R}\sqrt{W(\Psi)}\,\lVert\Psi'\rVert\,\mathrm dz\nonumber\ge 2\int_{\mathbb R}\sqrt{\widetilde g(\rho(\Psi))}\,|(\rho\circ\Psi)'|\,\mathrm dz.
\end{equation}
The continuous function $d(\Psi(z),S^-)-d(\Psi(z),S^+)$ changes sign between the two ends. At a zero $z_0$, the triangle inequality gives $d(\Psi(z_0),S^\pm)\ge D/2$, hence $\rho(\Psi(z_0))\ge D/2$. Set $r:=\rho\circ\Psi$, and let
\[
N_r(s):=\#\{z\in\mathbb R:r(z)=s\}
\]
denote the multiplicity of the level $s$, with the usual convention that
$N_r(s)=+\infty$ when the level set is infinite.  Applying the one-dimensional area formula to $r|_{[-R,R]}$ and then
letting $R\to\infty$ by monotone convergence gives
\cite[Section~3.4]{EvansGariepy2015}
\begin{equation}\label{equ:coarea-multiplicity}
\int_{\mathbb R}
\sqrt{\widetilde g(r(z))}\,|r'(z)|\,\mathrm dz
=
\int_0^\infty \sqrt{\widetilde g(s)}\,N_r(s)\,\mathrm ds.
\end{equation}
Since $r(z)\to0$ as $z\to\pm\infty$ and $r(z_0)\ge D/2$, continuity implies
\[
    N_r(s)\ge2\qquad\text{for a.e. }s\in(0,D/2).
\]
Consequently,
\begin{align*}
    2\int_{\mathbb R}\sqrt{\widetilde g(r(z))}\,|r'(z)|\,\mathrm dz=2\int_0^\infty\sqrt{\widetilde g(s)}\,N_r(s)\,\mathrm ds\ge4\int_0^{D/2}\sqrt{g(s)}\,\mathrm ds.
\end{align*}
This proves \eqref{equ:general-lower-bound}. The stated a.e.\ conditions make every inequality an equality.
\end{proof}

\begin{remark}
The coarea formula is used here not merely to reparametrize the action, but to retain the multiplicity of the distance levels crossed by a connection. The estimate $N_r(s)\ge2$ expresses the unavoidable passage from one vacuum component to the other, while equality $N_r(s)=2$ for almost every intermediate level is part of the rigidity mechanism for a sharp minimizer. Thus the same scalar reduction yields both the global lower bound and the geometric information needed for the attainment criterion.
\end{remark}

For orbit-to-orbit minimization, the lower bound is sufficient. The fixed-endpoint problem additionally requires an equality analysis: a profile can attain the orbit-to-orbit cost only when its prescribed endpoints are geometrically compatible with the shortest crossing between the two zero sets.

\begin{proposition}[Rigidity at the sharp lower bound]\label{prop:sharp-rigidity}
In addition to the hypotheses of Proposition~\ref{prop:criterion}, suppose that
\[
    W^{-1}(0)=S^-\cup S^+,\qquad g(s)>0\quad\text{for }0<s<D/2.
\]
Let $A^-\in S^-$ and $A^+\in S^+$, and let $\Psi\in H^1_{\mathrm{loc}}(\mathbb R;\mathcal H)$ have finite energy, limits $\Psi(-\infty)=A^-$ and $\Psi(+\infty)=A^+$, and attain the lower bound
\begin{equation}\label{equ:sharp-rigidity-energy}
    \int_{\mathbb R}\bigl(\lVert\Psi'\rVert^2+W(\Psi)\bigr)\,\mathrm dz=4\int_0^{D/2}\sqrt{g(s)}\,\mathrm ds.
\end{equation}
Then
\begin{equation}\label{equ:rigidity-closest-endpoints}
    \lVert A^+-A^-\rVert=D.
\end{equation}
\end{proposition}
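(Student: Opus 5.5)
The plan is to trace the equality cases in the proof of Proposition~\ref{prop:criterion}. Equality in \eqref{equ:sharp-rigidity-energy} forces every inequality in that chain to be an equality almost everywhere. Set $r:=\rho\circ\Psi$. The following will hold.

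First, $\lVert\Psi'\rVert^2=W(\Psi)$ a.e. (equipartition).

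Second, $\sqrt{W(\Psi)}\lVert\Psi'\rVert=\sqrt{\widetilde g(r)}\,|r'|$ a.e.

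Third, $\int_0^\infty\sqrt{\widetilde g(s)}\,(N_r(s)-2\cdot\mathbf 1_{(0,D/2)}(s))\,\mathrm ds=0$. Since $g>0$ on $(0,D/2)$ and $N_r\ge2$ there, this gives $N_r(s)=2$ for a.e. $s\in(0,D/2)$.

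From the second identity, on the set where $\Psi'\ne0$ and $0<r<D/2$ we get $|r'|=\lVert\Psi'\rVert$ and $W(\Psi)=g(r)$. Indeed $W\ge\widetilde g(r)$, $|r'|\le\lVert\Psi'\rVert$, and both factors are positive there, using $W>0$ off $S^-\cup S^+$.

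The core step is to show that $\Psi$ is, up to reparametrization, a segment of length exactly $D$. Split $\mathbb R$ into $\{r<D/2\}$ and the set $Z:=\{r\ge D/2\}$.

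On $Z$, $\widetilde g(r)=0$, so the second identity gives $W(\Psi)\lVert\Psi'\rVert^2=0$. Combined with equipartition, this gives $\Psi'=0$ a.e. on $Z$. I expect $Z$ to reduce to a single point (or an interval on which $\Psi$ is constant). The argument: $N_r=2$ a.e. forces $r$ to be unimodal, increasing then decreasing, outside $Z$.

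On $\{r<D/2\}$, $|r'|=\lVert\Psi'\rVert$ a.e. Hence the arclength of $\Psi$ equals the total variation of $r$, which is $D/2$ on each side, provided $\max r=D/2$.

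To show $\max r= D/2$, take $z_0$ with $d(\Psi(z_0),S^-)=d(\Psi(z_0),S^+)$. Then $\rho(\Psi(z_0))\ge D/2$. If the value exceeded $D/2$, levels slightly above $D/2$ would not contribute energy, since $\widetilde g=0$ there. However, $\Psi$ would have to be constant on $Z$, contradicting continuity of $r$ unless $r\le D/2$ throughout. Hence $r$ attains exactly $D/2$.

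Then the total length satisfies $\int_{\mathbb R}\lVert\Psi'\rVert\,\mathrm dz=\int|r'|=2\cdot D/2=D$, with the contribution of $Z$ equal to zero. Therefore
\[
\lVert A^+-A^-\rVert\le\int_{\mathbb R}\lVert\Psi'\rVert\,\mathrm dz=D.
\]
The reverse inequality $\lVert A^+-A^-\rVert\ge d(S^-,S^+)=D$ is trivial, which yields \eqref{equ:rigidity-closest-endpoints}.

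The main obstacle is making the unimodality and length bookkeeping rigorous when $r$ is only locally absolutely continuous and $N_r=2$ holds only almost everywhere. A subtle point is that $\Psi$ might move tangentially while $r$ stays constant on a positive-measure set. On such a set $r'=0$, so $|r'|=\lVert\Psi'\rVert$ forces $\Psi'=0$ there whenever $0<r<D/2$.

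At $r=0$, i.e. on the vacuum, $W=0$, so equipartition gives $\Psi'=0$ a.e. on $\{r=0\}$. This case must be included separately to kill slow motion along $S^\pm$, and it is exactly the mechanism that excludes nonclosest endpoints.

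With this in hand, the length computation uses $\int\lVert\Psi'\rVert=\int|r'|$ over the whole line. The identity $\int|r'|=\int_0^\infty N_r(s)\,\mathrm ds=2\cdot D/2$ then follows from the area formula, using $N_r\le 2$ a.e. on $(0,D/2)$ and $N_r=0$ a.e. above $D/2$. The latter holds because $\{r>D/2\}$ would otherwise carry motion without energy, which equipartition rules out.
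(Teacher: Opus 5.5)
Your proposal is correct and follows the paper's proof. In both, equality in the calibration/coarea chain yields four facts: equipartition, $|r'|=\lVert\Psi'\rVert$ a.e.\ on $\{0<r<D/2\}$, $\Psi'=0$ a.e.\ on $\{r=0\}\cup\{r\ge D/2\}$, and $N_r=2$ a.e.\ on $(0,D/2)$; together these give $\lVert A^+-A^-\rVert\le\int_{\mathbb R}\lVert\Psi'\rVert\,\mathrm dz=D$. The differences are minor: you bound $\lVert A^+-A^-\rVert$ directly by the total length $\int_0^{D/2}N_r(s)\,\mathrm ds=D$, whereas the paper splits at $z_0$ and applies the triangle inequality through $\Psi(z_0)$; also, your unimodality remark and the guess that $Z$ might be an interval are unnecessary, since $\{r\ge D/2\}$ is null outright ($W(\Psi)=\lVert\Psi'\rVert^2=0$ a.e.\ there would force $r=0$).
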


\begin{proof}
Let $r:=\rho\circ\Psi$, and extend $g$ by zero on $[D/2,\infty)$ as in the proof of Proposition~\ref{prop:criterion}. Equality in \eqref{equ:sharp-rigidity-energy}, together with the chain of inequalities in the proof of Proposition~\ref{prop:criterion}, forces equality at every step. In particular,
\begin{equation}\label{equ:rigidity-equipartition}
    \lVert\Psi'\rVert^2=W(\Psi)\quad\text{a.e.}
\end{equation}
and, on the set $0<r<D/2$,
\begin{equation}\label{equ:rigidity-radial-speed}
    W(\Psi)=g(r),\qquad |r'|=\lVert\Psi'\rVert\quad\text{a.e.}
\end{equation}
Indeed, the two pointwise gaps
$(\sqrt{W}-\sqrt{g(r)})\lVert\Psi'\rVert$ and
$\sqrt{g(r)}(\lVert\Psi'\rVert-|r'|)$ are nonnegative, and their
integrals must vanish.  On $\{0<r<D/2\}$ one has $g(r)>0$. Moreover,
$\lVert\Psi'\rVert$ cannot vanish on a subset of positive measure there:
by \eqref{equ:rigidity-equipartition}, this would imply $W(\Psi)=0$ and
hence $\Psi\in S^-\cup S^+$, which would give $r=0$. Therefore
$\lVert\Psi'\rVert>0$ a.e.\ on $\{0<r<D/2\}$, and the vanishing of the two
gaps proves \eqref{equ:rigidity-radial-speed}.

On $\{r\ge D/2\}$, equality in the second inequality of the
calibration chain, together with \eqref{equ:rigidity-equipartition},
implies $W(\Psi)=\lVert\Psi'\rVert=0$ a.e. Since $W^{-1}(0)=S^-\cup S^+$ and
$\rho=0$ on this zero set, $\{r\ge D/2\}$ has measure zero. In
particular, the open set $\{r>D/2\}$ is empty.

Let $N_r(s)$ denote the number of preimages of a level $s$. Equality in the coarea step and the strict positivity of $g$ give
\begin{equation}\label{equ:rigidity-level-count}
    N_r(s)=2\quad\text{for a.e. }s\in(0,D/2).
\end{equation}
The function $d(\Psi(z),S^-)-d(\Psi(z),S^+)$ changes sign, so there is $z_0\in\mathbb R$ at which the two distances agree. The triangle inequality gives $r(z_0)\ge D/2$, while the preceding paragraph gives $r\le D/2$ everywhere; hence $r(z_0)=D/2$.

For almost every $s\in(0,D/2)$, continuity and the limits at infinity give at least one preimage of $s$ on each side of $z_0$. By \eqref{equ:rigidity-level-count}, there is exactly one on each side. The one-dimensional coarea formula on the two half-lines therefore yields
\begin{equation}\label{equ:rigidity-half-variation}
    \int_{-\infty}^{z_0}|r'|\,\mathrm dz=\int_{z_0}^{\infty}|r'|\,\mathrm dz=\frac D2.
\end{equation}
Equations \eqref{equ:rigidity-equipartition}--\eqref{equ:rigidity-radial-speed} show that the same identities hold with $\lVert\Psi'\rVert$ in place of $|r'|$: on $\{r=0\}$ one has $W(\Psi)=0$ and hence $\Psi'=0$ a.e., while $\{r\ge D/2\}$ has measure zero. Consequently,
\[
    \frac D2=d(\Psi(z_0),S^-)\le\lVert\Psi(z_0)-A^-\rVert\le\int_{-\infty}^{z_0}\lVert\Psi'\rVert\,\mathrm dz=\frac D2,
\]
and similarly $\lVert A^+-\Psi(z_0)\rVert=D/2$.  It follows that
\[
    \lVert A^+-A^-\rVert\le\lVert A^+-\Psi(z_0)\rVert+\lVert\Psi(z_0)-A^-\rVert=D.
\]
The reverse inequality follows from $A^\pm\in S^\pm$ and the definition of $D$, proving \eqref{equ:rigidity-closest-endpoints}.
\end{proof}

\begin{remark}
Only the near-vacuum comparison \eqref{equ:local-distance-lower-bound} is needed. No positive lower bound is required in the middle region $\rho\ge D/2$; global nonnegativity of $W$ is sufficient.
\end{remark}

\section{Application to the kink \texorpdfstring{$\Phi_A$}{Phi A}}\label{sec:app-a}

We now combine the A-type orbit-distance comparison with the abstract criterion. The lower bound follows immediately; the substantive equality check is that $\Phi_A$ follows a minimizing segment at the equipartition speed.

For $0\le s<d_A/2$, define
\begin{equation*}
    g_A(s):=\frac{8\mu^4}{\lambda}\frac{s^2}{d_A^2}\left(1-\frac{s}{d_A}\right)^2.
\end{equation*}
Let
\[
    \rho_A(Q):=\min\{d_F(Q,\Sigma_A^-),d_F(Q,\Sigma_A^+)\}.
\]
Proposition~\ref{prop:gena} gives
\begin{equation}\label{equ:aux-a}
    V(Q)\ge g_A(\rho_A(Q))\qquad\text{whenever }\rho_A(Q)<d_A/2.
\end{equation}
Proposition~\ref{prop:criterion} therefore yields
\begin{equation}\label{equ:bound-a}
    E(\Psi)\ge4\int_0^{d_A/2}\sqrt{g_A(s)}\,\mathrm ds\quad\text{for every }\Psi\in\mathcal A(\Sigma_A^-,\Sigma_A^+).
\end{equation}

It remains to verify that the explicit profile attains this bound. Put $t(z):=\tanh(\mu z/\sqrt2)$.  Direct calculation gives
\begin{align}
    \lVert\Phi_A'(z)\rVert_F^2=\frac{\mu^4}{2\lambda}\sech^4\left(\frac{\mu z}{\sqrt2}\right),\label{equ:kinetic-a}\quad \Tr(\Phi_A^2)=\frac{5\mu^2}{4\lambda}\left(4+\frac45t^2\right),\quad  \Tr(\Phi_A^4)=\frac{25\mu^4}{16\lambda^2}\left(4+\frac{24}{25}t^2+\frac{52}{125}t^4\right).\nonumber
\end{align}
Using \eqref{equ:parameters-a} in \eqref{equ:potential} yields
\begin{equation}\label{equ:equipartition-a}
    V(\Phi_A(z))=\frac{\mu^4}{2\lambda}(1-t(z)^2)^2=\lVert\Phi_A'(z)\rVert_F^2.
\end{equation}
Moreover,
\[
    \Phi_A(z)=\frac{1-t(z)}{2}\Phi_A^-+\frac{1+t(z)}{2}\Phi_A^+.
\]
The endpoint pair realizes the orbit distance, so Lemma~\ref{lem:minimizing-segment} gives that $\rho_A\circ\Phi_A$ increases from $0$ to $d_A/2$ and then decreases to $0$, with $|(\rho_A\circ\Phi_A)'|=\lVert\Phi_A'\rVert_F$ away from $z=0$. All equality conditions in Proposition~\ref{prop:criterion} therefore hold, and
\[
    E(\Phi_A)=4\int_0^{d_A/2}\sqrt{g_A(s)}\,\mathrm ds=\frac{4\sqrt2\,\mu^3}{3\lambda}.
\]
This proves the energy assertion in Theorem~\ref{thm:globala}. For any absolutely continuous curve $\gamma$ joining the two vacuum orbits, the same Lipschitz and coarea argument, without the kinetic term, gives
\[
    2\int_0^1\sqrt{V(\gamma)}\,\lVert\gamma'\rVert_F\,\mathrm dt\ge4\int_0^{d_A/2}\sqrt{g_A(s)}\,\mathrm ds.
\]
The minimizing segment traced by $\Phi_A$ attains equality. Hence $\mathsf d_V(\Sigma_A^-,\Sigma_A^+)=E(\Phi_A)$.

\section{Application to the kink \texorpdfstring{$\Phi_B$}{Phi B}}\label{sec:app-b}

The B-type application has the same variational structure.  The different coefficient in the auxiliary potential changes the wall tension but not the mechanism by which the explicit profile attains the sharp bound.

For $0\le s<d_B/2$, define
\begin{equation}\label{equ:g-b}
    g_B(s):=\frac{27\mu^4}{5h}\frac{s^2}{d_B^2}\left(1-\frac{s}{d_B}\right)^2.
\end{equation}
Let
\[
    \rho_B(Q):=\min\{d_F(Q,\Sigma_B^-),d_F(Q,\Sigma_B^+)\}.
\]
Proposition~\ref{prop:genb} gives
\begin{equation}\label{equ:aux-b}
    V(Q)\ge g_B(\rho_B(Q))\qquad\text{whenever }\rho_B(Q)<d_B/2.
\end{equation}
Proposition~\ref{prop:criterion} therefore yields
\begin{equation}\label{equ:bound-b}
    E(\Psi)\ge4\int_0^{d_B/2}\sqrt{g_B(s)}\,\mathrm ds\quad\text{for every }\Psi\in\mathcal A(\Sigma_B^-,\Sigma_B^+).
\end{equation}

With $t(z):=\tanh(\mu z/\sqrt2)$, direct substitution gives
\begin{equation}\label{equ:equipartition-b}
    \lVert\Phi_B'(z)\rVert_F^2=V(\Phi_B(z))=\frac{27\mu^4}{80h}\sech^4\left(\frac{\mu z}{\sqrt2}\right).
\end{equation}
The profile has the line-segment representation
\[
    \Phi_B(z)=\frac{1-t(z)}{2}\Phi_B^-+\frac{1+t(z)}{2}\Phi_B^+.
\]
Lemma~\ref{lem:minimizing-segment} again verifies all equality conditions, so
\[
    E(\Phi_B)=4\int_0^{d_B/2}\sqrt{g_B(s)}\,\mathrm ds=\frac{9\sqrt2\,\mu^3}{10h}.
\]
This proves the energy assertion in Theorem~\ref{thm:globalb}. Applying the corresponding weighted coarea estimate to arbitrary curves and using the minimizing segment traced by $\Phi_B$ gives $\mathsf d_V(\Sigma_B^-,\Sigma_B^+)=E(\Phi_B)$.

\section{Fixed-endpoint sectors and non-attainment}
\label{sec:fixed-endpoints}

We now prove Theorem~\ref{thm:fixed-endpoint-sectors}.  Two ingredients are needed beyond orbit-to-orbit minimality. First, the closest endpoint pairs must be characterized explicitly. Second, arbitrary prescribed endpoints must be connected to such a pair by increasingly slow motion along the zero-energy vacuum orbits.

\begin{lemma}[Closest pairs of vacuum representatives]\label{lem:closest-pairs}
For $A^+\in\Sigma_A^+$ and $A^-\in\Sigma_A^-$ there are rank-two orthogonal projections $P,Q$ such that
\[
    A^+=u(-2I+5P),\qquad A^-=u(2I-5Q),
\]
and
\begin{equation}\label{equ:A-projection-distance}
    \lVert A^+-A^-\rVert_F^2=20u^2+50u^2\Tr(PQ).
\end{equation}
For $A^+\in\Sigma_B^+$ and $A^-\in\Sigma_B^-$ there are rank-one orthogonal projections $P,Q$ such that
\[
    A^+=v(I-5P),\qquad A^-=v(-I+5Q),
\]
and
\begin{equation}\label{equ:B-projection-distance}
    \lVert A^+-A^-\rVert_F^2=30v^2+50v^2\Tr(PQ).
\end{equation}
In either case the endpoint pair realizes the orbit distance if and only if $PQ=0$. Every such closest pair is a simultaneous $\SU(5)$ conjugate of the corresponding standard pair $(\Phi_\ast^-,\Phi_\ast^+)$.
\end{lemma}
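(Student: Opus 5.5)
We first write the vacua in projection form and compute their distance through a single trace. For $A^+\in\Sigma_A^+$, diagonalize $A^+=U^\dagger\Phi_A^+U$ with $\Phi_A^+=u\diag(3,3,-2,-2,-2)=u(-2I+5P_0)$, where $P_0=\diag(1,1,0,0,0)$; then $P:=U^\dagger P_0U$ is a rank-two orthogonal projection and $A^+=u(-2I+5P)$. Similarly $\Phi_A^-=u\diag(2,2,-3,-3,2)=u(2I-5Q_0)$ with $Q_0=\diag(0,0,1,1,0)$, which gives $A^-=u(2I-5Q)$. In the B-regime, $\Phi_B^+=v(I-5e_5e_5^T)$ and $\Phi_B^-=v(-I+5e_4e_4^T)$, so one gets rank-one projections in the same way.

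Next comes the distance formula. In case A, $A^+-A^-=u(-4I+5P+5Q)$. We expand $\lVert\cdot\rVert_F^2$ using $\Tr I=5$, $\Tr P=\Tr Q=2$, $P^2=P$, $Q^2=Q$:
\[
u^2\bigl[80-40\cdot2-40\cdot2+25\cdot2+25\cdot2+50\Tr(PQ)\bigr]=u^2\bigl[20+50\Tr(PQ)\bigr].
\]
Case B is analogous: $A^+-A^-=v(2I-5P-5Q)$, whose squared norm is $v^2[20-20-20+25+25+50\Tr(PQ)]=v^2[30+50\Tr(PQ)]$. Since $\Tr(PQ)=\lVert PQ\rVert_F^2\ge0$, with equality iff $PQ=0$, and Lemma~\ref{lem:orbit-distance} gives $d_A^2=20u^2$ and $d_B^2=30v^2$, the pair is closest iff $PQ=0$.

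Finally we establish the simultaneous conjugacy, which is the only step needing care. If $PQ=0$, then the ranges of $P$ and $Q$ are orthogonal subspaces; taking adjoints also gives $QP=0$. In case A these subspaces have dimensions $2+2$, and in case B dimensions $1+1$. Choose an orthonormal basis of $\mathbb C^5$ adapted to the decomposition $\operatorname{ran}P\oplus\operatorname{ran}Q\oplus(\text{rest})$, ordered to match the standard positions of $P_0$ and $Q_0$: $e_1,e_2\mapsto\operatorname{ran}P$ and $e_3,e_4\mapsto\operatorname{ran}Q$ in case A, and $e_5\mapsto\operatorname{ran}P$ and $e_4\mapsto\operatorname{ran}Q$ in case B. This gives a unitary $W$ with $W^\dagger P_0W=P$ and $W^\dagger Q_0W=Q$. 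Multiplying $W$ by a scalar phase $e^{i\theta}$ with $e^{5i\theta}=\det W^{-1}$ puts it in $\SU(5)$ without changing the conjugation. Hence $(A^-,A^+)=W^\dagger(\Phi_\ast^-,\Phi_\ast^+)W$. Conversely, conjugates of the standard pair satisfy $P_0Q_0=0$, which is consistent with the equality case.
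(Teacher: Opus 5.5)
Your proposal is correct and takes essentially the same route as the paper. You write the vacua in projection form using the eigenvalue multiplicities, expand the Frobenius norm with $P^2=P$ and $Q^2=Q$, and characterize closest pairs by $\Tr(PQ)=\lVert PQ\rVert_F^2=0$; you then build a unitary from an orthonormal basis adapted to the orthogonal ranges and fix its determinant with a scalar phase to land in $\SU(5)$.
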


\begin{proof}
The projection representations follow directly from the two eigenvalue multiplicities. Expanding the Frobenius norms and using $P^2=P$, $Q^2=Q$ gives \eqref{equ:A-projection-distance} and \eqref{equ:B-projection-distance}. Moreover,
\[
    \Tr(PQ)=\Tr(PQP)=\lVert QP\rVert_F^2\ge0,
\]
with equality if and only if the ranges of $P$ and $Q$ are orthogonal. Equations \eqref{equ:orbit-distances}, \eqref{equ:A-projection-distance}, and \eqref{equ:B-projection-distance} therefore give the closest-pair criterion. An ordered pair of orthogonal subspaces of the relevant ranks can be mapped to the standard coordinate pair by a unitary transformation; a scalar phase adjusts its determinant without changing the conjugation, so the transformation may be chosen in $\SU(5)$.
\end{proof}

\begin{proof}[Proof of Theorem~\ref{thm:fixed-endpoint-sectors}]
Fix $\ast\in\{A,B\}$ and $A^\pm\in\Sigma_\ast^\pm$.  Since $\mathcal A(A^-,A^+)\subset\mathcal A(\Sigma_\ast^-,\Sigma_\ast^+)$, Theorems~\ref{thm:globala}--\ref{thm:globalb} give
\begin{equation}\label{equ:fixed-lower-bound}
    \inf_{\Psi\in\mathcal A(A^-,A^+)}E(\Psi)\ge\sigma_\ast.
\end{equation}

To prove the reverse inequality, choose a closest pair $P^-\in\Sigma_\ast^-$, $P^+\in\Sigma_\ast^+$ and, by Lemma~\ref{lem:closest-pairs}, a conjugate $\widehat\Phi_\ast$ of the explicit kink joining $P^-$ to $P^+$.  Because the vacuum orbits are connected smooth homogeneous manifolds, there are piecewise $C^1$ curves
\[
    \gamma^-:[0,1]\to\Sigma_\ast^-,\qquad\gamma^-(0)=A^-,\quad\gamma^-(1)=P^-,
\]
and
\[
    \gamma^+:[0,1]\to\Sigma_\ast^+,\qquad\gamma^+(0)=P^+,\quad\gamma^+(1)=A^+.
\]
For $T>0$, define $\gamma_T^\pm(z):=\gamma^\pm(z/T)$ on $[0,T]$. Traversing the orbit curves in this way costs only
\begin{equation}\label{equ:slow-orbit-cost}
    \int_0^T\left(\lVert(\gamma_T^\pm)'\rVert_F^2+V(\gamma_T^\pm)\right)\,\mathrm dz=\frac1T\int_0^1\lVert(\gamma^\pm)'(s)\rVert_F^2\,\mathrm ds,
\end{equation}
because $V=0$ on the vacuum orbits.

For $L>0$, set $Q_L^\pm:=\widehat\Phi_\ast(\pm L)$ and connect $P^-$ to $Q_L^-$ and $Q_L^+$ to $P^+$ by the unit-time affine segments
\[
    \eta_L^-(s)=(1-s)P^-+sQ_L^-, \qquad\eta_L^+(s)=(1-s)Q_L^++sP^+.
\]
Since $Q_L^\pm\to P^\pm$, and $P^\pm$ are global minima of the smooth
potential $V$, one has $V(P^\pm)=0$ and $DV(P^\pm)=0$.  Hence there are
$C>0$ and $L_0>0$ such that, for $L\ge L_0$ and $0\le s\le1$,
\[
0\le V\bigl(P^-+s(Q_L^--P^-)\bigr)
   \le C\lVert Q_L^--P^-\rVert_F^2
\]
and the analogous estimate holds for the $+$ connector.  Consequently,
\begin{equation}\label{equ:connector-cost}
    e_L^-+e_L^+\longrightarrow0\qquad\text{as }L\to\infty,
\end{equation}
where
\[
    e_L^\pm:=\int_0^1\left(\lVert(\eta_L^\pm)'(s)\rVert_F^2
    +V(\eta_L^\pm(s))\right)\,\mathrm ds,
\]
and, more explicitly,
\[
    e_L^\pm\le (1+C)\lVert Q_L^\pm-P^\pm\rVert_F^2
\]
for all sufficiently large $L$.
Translate the five pieces $\gamma_T^-$, $\eta_L^-$, $\widehat\Phi_\ast|_{[-L,L]}$, $\eta_L^+$, and $\gamma_T^+$ to adjacent intervals, concatenate them, and extend the resulting path constantly by $A^-$ and $A^+$ at the two ends. This gives $\Psi_{L,T}\in\mathcal A(A^-,A^+)$ and
\begin{align}\label{equ:recovery-energy}
    E(\Psi_{L,T})=\frac1T\int_0^1\lVert(\gamma^-)'\rVert_F^2\,\mathrm ds+e_L^-+\int_{-L}^{L}\bigl(\lVert\widehat\Phi_\ast'\rVert_F^2+V(\widehat\Phi_\ast)\bigr)\,\mathrm dz\notag+e_L^++\frac1T\int_0^1\lVert(\gamma^+)'\rVert_F^2\,\mathrm ds.
\end{align}
Letting first $L\to\infty$ and then $T\to\infty$ gives $\limsup E(\Psi_{L,T})\le\sigma_\ast$. Together with \eqref{equ:fixed-lower-bound}, this proves
\eqref{equ:fixed-endpoint-infimum}.

If $\lVert A^+-A^-\rVert_F=d_\ast$, Lemma~\ref{lem:closest-pairs} gives $U\in\SU(5)$ such that $A^\pm=U^\dagger\Phi_\ast^\pm U$. Hence $U^\dagger\Phi_\ast(z-z_0)U$ belongs to $\mathcal A(A^-,A^+)$ and has energy $\sigma_\ast$, so the infimum is attained. Conversely, if a minimizer exists, its energy equals $\sigma_\ast$, which is exactly the sharp lower bound in Proposition~\ref{prop:criterion}.  Proposition~\ref{prop:sharp-rigidity}, using Proposition~\ref{prop:vacuum-characterization} and the strict positivity of $g_A,g_B$ on $(0,d_\ast/2)$, then implies $\lVert A^+-A^-\rVert_F=d_\ast$. This proves the attainment criterion and non-attainment for every nonclosest endpoint pair.
\end{proof}

\begin{remark}\label{rmk:minimizer-symmetry}
For every $U\in\SU(5)$ and $z_0\in\mathbb R$,
\[
    \Phi_{\ast,U,z_0}(z):=U^\dagger\Phi_\ast(z-z_0)U,\qquad \ast\in\{A,B\},
\]
is a minimizer in the corresponding orbit-to-orbit class. In a fixed-endpoint class it is admissible precisely when the conjugation sends the standard endpoint pair to the prescribed pair. Translation reflects the autonomy of the energy, while conjugation reflects its $\SU(5)$ symmetry. We do not claim that these are all orbit-to-orbit minimizers; a complete classification would require a systematic analysis of all equality cases in the orbit-distance polynomial estimates.
\end{remark}

\begin{remark}[Loss of compactness]\label{rmk:loss-compactness}
For a nonclosest endpoint pair, the recovery sequence above spreads the motion along each vacuum orbit over intervals whose lengths tend to infinity. The tangential kinetic cost then tends to zero by \eqref{equ:slow-orbit-cost}, while the central kink retains the wall tension. This explicitly identifies the mechanism responsible for non-attainment in that fixed-endpoint sector.
\end{remark}

\section{Conclusion}\label{sec:conclusion}
We have developed an orbit-to-orbit variational framework for the two distinguished $\SU(5)\times\mathbb Z_2$ parameter regimes. The full vacuum sets are pairs of compact homogeneous conjugacy orbits, so the corresponding wall problems are manifold-to-manifold rather than point-to-point. Within this framework, the explicit non-Abelian kinks minimize the energy among all finite-energy matrix-valued connections joining the two vacuum components, without restrictions to diagonal, commuting, or symmetry-reduced competitors.  Their exact wall tensions are
\[
    \sigma_A=\frac{4\sqrt2\,\mu^3}{3\lambda},\qquad\sigma_B=\frac{9\sqrt2\,\mu^3}{10h}.
\]
Equivalently, the two minimizing line segments realize the degenerate weighted distances between the corresponding vacuum manifolds.

The proof separates two mechanisms. The model-independent part is the distance--coarea criterion, which converts a lower bound depending only on the distance to two compact zero sets into a sharp action bound. The model-specific part is the construction of the auxiliary potentials: it requires optimal spectral matching between conjugacy orbits and exact polynomial positivity on constrained trace spheres. This separation makes clear which part of the method can be reused for other matrix-valued multiwell potentials and which part must be re-established for each new model.

The fixed-endpoint analysis further exposes the role of the continuous vacuum geometry. Every prescribed endpoint pair on the two vacuum components has the same energy infimum $\sigma_\ast$. The infimum is attained exactly for closest pairs; for nonclosest pairs it is approached by profiles that move increasingly slowly along the zero-energy orbits and is not attained. Thus the vacuum directions do not change the optimal tension, but they do create a precise loss-of-compactness mechanism absent from isolated-well problems.

The conclusions are limited to the special coupling ratios in \eqref{equ:parameters-a} and \eqref{equ:parameters-b} and to one-dimensional static walls.  General coupling ratios, different pairs of vacuum orbits, and larger $\SU(N)$ models require new sharp orbit-distance comparisons. Other natural questions include a complete classification of all equality cases and orbit-to-orbit minimizers, and the role of the tensions obtained here in higher-dimensional sharp-interface limits.

\begin{appendices}

\section{Stationary-point reductions for the A-type comparison}
\label{app:a-stationary}

For completeness, we record the exact eliminations used in the proof of
Lemma~\ref{lem:reala}.  This also makes explicit why the stationary-point
lists in the two positivity arguments are exhaustive.

\subsection{The polynomial \texorpdfstring{$B_A$}{B A}}
At a constrained stationary point, equations
\eqref{equ:BA-stationary} hold together with
\[
\sum_{k=1}^5z_k=0,\qquad \sum_{k=1}^5z_k^2=20.
\]
If $z_3=z_4=z_5=\theta$, the two constraints give, with
$s=z_1+z_2$ and $p=z_1z_2$,
\[
s=-3\theta,\qquad p=6\theta^2-10.
\]
For $z_1=z_2$ this gives the two points listed in Step~1.  If
$z_1\ne z_2$, subtracting the last two equations in
\eqref{equ:BA-stationary} and eliminating the multipliers reduces the
remaining condition to
\[
\theta^2-\theta-1=0.
\]
The quadratic with roots $z_1,z_2$ is then
$X^2+3\theta X+6\theta^2-10$, and substitution of the preceding relation
shows that $-2$ is one of its roots.

Now suppose $z_3=z_4=a$, $z_5=b$ with $a\ne b$.  If $z_1=z_2=r$,
exact elimination of $b,r,\sigma_1,\sigma_2$ gives
\[
(a-1)(a+1)(a^2-5)(3a^2-8)=0.
\]
The factor $3a^2-8$ yields $a=b=\pm2\sqrt6/3$ and hence belongs to the
preceding all-equal case.  For the remaining roots, substitution gives
\[
(a,b,r)=(-1,4,-1),\ (1,-4,1),\
(\sqrt5,0,-\sqrt5),\ (-\sqrt5,0,\sqrt5).
\]

Finally assume $z_1\ne z_2$.  Subtracting the equations corresponding to
$a$ and $b$, and separately those corresponding to $z_1$ and $z_2$, gives
\[
\sigma_2=12(a+b),\qquad
\sigma_2=-18s-60,
\qquad s:=z_1+z_2.
\]
Together with $s+2a+b=0$, these identities yield
\[
b=10-4a,\qquad s=2a-10.
\]
The sum of the $z_1$ and $z_2$ equations gives
\[
p:=z_1z_2=\frac{8a^2-30a+45}{3},
\]
and the second-moment constraint then reduces to
\[
\frac{50}{3}(a-3)^2=0.
\]
Thus $a=3$, $b=-2$, $s=-4$, and $p=9$, for which
$s^2-4p=-20$.  Hence no real stationary point occurs in this final case.

\subsection{The polynomial \texorpdfstring{$\mathcal K$}{K}}
The constrained stationary equations are precisely
\eqref{equ:K-stationary}, together with
\[
\sum_{k=1}^5w_k=0,\qquad \sum_{k=1}^5w_k^2=5.
\]
If $w_3=w_4=w_5=\theta$, set
$s=w_1+w_2$ and $p=w_1w_2$.  The constraints give
\[
s=-3\theta,\qquad p=6\theta^2-\frac52.
\]
For $w_1=w_2$ this yields the two points listed in Step~2.  If
$w_1\ne w_2$, subtracting the last two equations in
\eqref{equ:K-stationary} and eliminating the multipliers gives
\[
16\theta^3-60\theta^2+26\theta+15=0,
\]
while the requirement that $w_1,w_2$ be real is
\[
(w_1-w_2)^2=s^2-4p=10-15\theta^2\ge0.
\]

Suppose next that $w_3=w_4=a$, $w_5=b$ with $a\ne b$ and
$w_1=w_2=r$.  Eliminating $b,r,\sigma_1,\sigma_2$ gives
\[
(2a-1)(2a+1)(3a^2-2)(4a^2-5)(4a^2+39)=0.
\]
The factor $3a^2-2$ again gives $a=b$ and belongs to the all-equal case,
whereas $4a^2+39$ has no real root.  The remaining roots give exactly
\[
(a,b,r)=\left(-\frac12,2,-\frac12\right),
\left(\frac12,-2,\frac12\right),
\left(\frac{\sqrt5}{2},0,-\frac{\sqrt5}{2}\right),
\left(-\frac{\sqrt5}{2},0,\frac{\sqrt5}{2}\right).
\]

If $w_1\ne w_2$, subtraction of the relevant stationary equations,
followed by the two constraints, gives
\[
2ab-2b^2-24a-6b+35=0.
\]
Since $|b|\le\sqrt5<12$, this determines
$a=(2b^2+6b-35)/(2b-24)$.  The discriminant of the quadratic with roots
$w_1,w_2$ is then the negative expression displayed in the proof of
Lemma~\ref{lem:reala}, so this case has no real solution.  If
$w_3,w_4,w_5$ are all distinct, they are all three roots of the first
cubic in \eqref{equ:K-stationary}; their sum is therefore $6$, contradicting
$w_3^2+w_4^2+w_5^2\le5$.  This completes the stationary classification.

\section{Exact Bernstein certificates}\label{app:certificates}

For completeness, we give the exact rational certificates used in the proof of Proposition~\ref{prop:genb}. If $P$ has degree $n$, write
\[
    P(a+(b-a)u)=\sum_{j=0}^n c_j\binom{n}{j}u^j(1-u)^{n-j},\qquad 0\le u\le1,
\]
and denote $(c_0,\ldots,c_n)$ by $\mathcal B_{[a,b]}(P)$. Positivity of all entries proves $P>0$ on $[a,b]$.

For
\[
    L_1(t)=-7t^4+120t^3+156t^2-4320t+10008
\]
the coefficient vectors are
\begin{align*}
    \mathcal B_{[-5,-4]}(L_1)&=(16133,17788,18994,19816,20312),\\
    \mathcal B_{[-4,0]}(L_1)&=(20312,22296,19064,14328,10008),\\
    \mathcal B_{[0,4]}(L_1)&=(10008,5688,1784,216,1112),\\
    \mathcal B_{[4,5]}(L_1)&=(1112,1336,1714,2248,2933).
\end{align*}
For
\[
    R_2(t)=-3t^4+80t^3+24t^2-2880t+7632
\]
we have
\begin{align*}
    \mathcal B_{[-5,-4]}(R_2)&=(10757,11852,12676,13264,13648),\\
    \mathcal B_{[-4,0]}(R_2)&=(13648,15184,13456,10512,7632),\\
    \mathcal B_{[0,4]}(R_2)&=(7632,4752,1936,464,848),\\
    \mathcal B_{[4,5]}(R_2)&=(848,944,1156,1492,1957).
\end{align*}
Finally, for
\[
    Q_6(t)=t^6-24t^5+36t^4+2232t^3-5616t^2-43794t+150849
\]
we obtain
\begin{align*}
    \mathcal B_{[-5,-4]}(Q_6)&=(63544,74880,432973/5,491692/5,549147/5,120844,131209),\\
    \mathcal B_{[-4,4]}(Q_6)&=(131209,214129,1277373/5,832837/5,325837/5,13329,17401),\\
    \mathcal B_{[4,5]}(Q_6)&=(17401,17910,96463/5,107806/5,123573/5,28738,33604).
\end{align*}
Every listed coefficient is strictly positive. These intervals cover $[-5,5]$, which contains $[-2\sqrt6,2\sqrt6]$. The certificate uses only exact integer and rational arithmetic.
\end{appendices}

\section*{Acknowledgements}
The authors would like to thank Professor Wei Wang for his guidance and
valuable comments and suggestions on the manuscript.

\section*{Statements and Declarations}

\noindent\textbf{Funding.}
No funding was received for conducting this study.

\medskip
\noindent\textbf{Author Contributions.}
Both authors contributed to the conception and mathematical analysis
of the study. Sisi Guan prepared the initial draft of the manuscript.
Both authors reviewed and approved the final manuscript.

\medskip
\noindent\textbf{Data Availability.}
No datasets were generated or analysed during the current study.

\medskip
\noindent\textbf{Competing Interests.}
The authors have no relevant financial or non-financial interests to disclose.

\bibliography{sn-bibliography}
\end{document}